\reversemarginpar \DeclareMathAlphabet{\mathpzc}{OT1}{pzc}{m}{it}
\newtheorem{prop}{\textbf{Proposition}}
\theoremstyle{plain}
\newtheorem{rmk}{\textbf{Remark}}
\theoremstyle{plain}
\newtheorem{theo}{\textbf{Theorem}}
\newtheorem{defi}{\textbf{Definition}}
\theoremstyle{plain}
\newtheorem{lem}{\textbf{Lemma}}
\theoremstyle{plain}
\theoremstyle{plain}
\theoremstyle{plain}
\newenvironment{prev}[1][Proof]{\textbf{#1.} }{\ \rule{0.5em}{0.5em}}
\theoremstyle{plain}
\definecolor{gray}{RGB}{234, 244, 243}
\title{\textbf{Existence and uniqueness for the solutions of non-autonomous stochastic differential algebraic equations with locally Lipschitz coefficients} \vspace*{.55cm}\\
	\author{  Oana Silvia Serea \footnote{oana.silvia.serea@hvl.no: Western Norway  University of Applied Sciences, Norway and University of Perpignan, France},  Antoine Tambue \footnote{antoine.tambue@hvl.no: Western Norway University of Applied Sciences, Norway and AIMS-South-Africa}, and  Guy Tsafack \footnote{Guy.Tsafack@hvl.no : Western Norway University of Applied Sciences, Norway},}}
\begin{document}
	\baselineskip 6mm
	\maketitle
	\noindent

	\section*{Abstract} In this paper, we study the well-posedness and regularity of  non-autonomous stochastic differential algebraic equations (SDAEs) with nonlinear, locally Lipschitz and monotone \eqref{equa3} coefficients of the form  \eqref{equa1}.
	The main difficulty is the fact that the operator $A(\cdot)$ is non-autonomous, i.~e. depends on t and the matrix $A(t)$ is singular for all $t\in \left[0,T\right]$. Our interest is in SDAE of index-1. This means that in order to solve the problem, we can transform the initial SDAEs into an ordinary stochastic differential equation with algebraic constraints.  Under appropriate hypothesizes, the main result establishes the existence and uniqueness of the solution in $\mathcal{M}^p(\left[0, T\right], \mathbb{R}^n)$, $p\geq 2$, $p\in \mathbb{N}$. Several strong estimations and regularity results are also provided. Note that, in this paper, we use various techniques such as  It\^o's lemma, Burkholder-Davis-Gundy inequality, and Young inequality.\\

	\textbf{Key words:}   Non-autonomous stochastic differential-algebraic equations, nonlinear and locally Lipschitz coefficients, index-1 differential-algebraic equations.

	\section{Introduction}
	We are interested in the well-posedness of the following stochastic differential algebraic equation:
	\begin{equation}\label{equa1}
		A(t)dX(t)=f(t,X(t))dt + g(t,X(t))dW(t), \quad t\in \left[0,T\right] ,\quad X(0)=X_0.
	\end{equation}
	Here $T>0,\quad T\ne \infty$, and  A : $ \left[0,T\right] \to \mathcal{M}_{n\times n}(\mathbb{R})$ is continuously differentiable. In addition  $ A(t)\in  \mathcal{M}_{n\times n}(\mathbb{R})$ is a singular matrix for $ t\in \left[0,T \right]$.
	The functions  $f:\left[0,T\right]\times\mathbb{R}^n  \to \mathbb{R}^n  $ (drift) and $g:\left[0,T\right]\times\mathbb{R}^n \to \mathbb{R}^{n\times m} $ (diffusion) are Borel measurable. 
	%in $\mathcal{C}^1(\left[0,T\right]\times\mathbb{R}^n)$.
	The process $W(\cdot)$ is a m-dimensional  Wiener process defined on the probability space $(\Omega, \mathcal{F},\mathbb{P})$ with the natural filtration  $\left( \mathcal{F}_t\right)_{t\geq 0} $. 
	The unknown function  $X(\cdot)$ is a vector-valued stochastic process of dimension  $n$ and depends both on time $ t\in \left[0,T \right] $ and the sample $\omega \in \Omega$. To ease the notation, the argument $\omega$ is omitted. % to simplify the notation so the stochastic process should be written as $x(t,\omega)$, $t\in \left[0, T \right],\quad \omega\in \Omega $.\\
	\\
	% \footnote{$A=A(t), \quad t>0$}
	
	Equations of type \eqref{equa1} arise in a large range of scientific (mathematics, physics, biology, chemistry, etc.), engineering, financial, and electrical applications \cite{Contandtankov2004, Gikhman1972,  Klebaner2004, Renate2003} and are currently studied by many researchers. 
	%Moreover,  equations of type (1)
	These equations are called stochastic differential algebraic equations (SDAEs). Note that SDAEs are a combination of algebraic constraints (AEs) and stochastic differential equations (SDEs). Detailed introduction of deterministic differential algebraic equations under the global Lipchitz conditions on the drift coefficient can be found in \cite{Campbell1995, Griepentrog1986, Wanner1991}, while the basic theory for stochastic differential equations under the global Lipchitz conditions on the drift and the diffusion coefficients can be found in \cite{Arnold1974,Friedman2010, Lawrence2012, Gikhman1972, Kloeden1992}. Many practical applications do not satisfy the global Lipchitz condition \cite{Burrage2003, Hutzenthaler2020,lord2022, Lewis2009}. In \cite{Guixin2011, Mao2007, Yong2015}, the existence and uniqueness under the locally Lipschitz condition on the coefficients are investigated for SDEs. However, SDEs are not enough to describe many phenomena. Indeed, in \cite{Demir2012, Thorsten2008, Oliver1999, Maten1999}, it has been proven that SDAEs of type  \eqref{equa1} are needed for modeling electrical circuits. In \cite{Oliver1999}, the authors studied the existence and uniqueness of solutions for SDAEs with linear drift coefficients. The general theory of SDAEs was investigated in \cite{Renate2003}, where the authors proved the existence and uniqueness of the solution under the global Lipschitz conditions on the coefficients. It is obvious that if matrix $A(t)$ in \eqref{equa1}  is a non-singular matrix, the SDAEs become SDEs. To the best of our knowledge, only index 1-SDAEs with a constant matrix $A(t)=A$ have been studied up to now. As mentioned before, in several practical problems, the matrix $A(t)$ is not a constant matrix \cite{Campbell1995,  Kunkel1995, Marz1995}; such equations are called non-autonomous SDAEs. In \cite{Nguyen2010, Oliver1999}, the authors proved the well-posedness of the non-autonomous linear SDAEs.  Unfortunately, real-world problems are nonlinear and linear equations describe very few problems.
	% The equations from many models are nonlinear (drift or diffusion) in general. 
	Therefore the study of fully nonlinear SDAEs of type \eqref{equa1}  is an interesting research topic.
	%it is essential to study the well-posedness in this case. 
	To the best of our knowledge, the existence and uniqueness of the solution for non-autonomous nonlinear SDAEs of type \eqref{equa1}  under local Lipschitz conditions is still an open problem. Due to the non-linearity of the drift and the noise coefficients, 
	the singularity of the matrix $A(\cdot)$ and the non-autonomous of SDAEs of type \eqref{equa1},  the study becomes more challenging, and there is no hope to find analytical solutions.
	%Indeed is well known that in such a case it is difficult to find an analytic solution. 
	Therefore numerical algorithms are the only tools to provide accurate approximations. 
	
	The design and the rigorous mathematical study of convergence (weak or strong) of a numerical algorithm to approximate  SDAEs of type \eqref{equa1}  depends on the regularity of the solution.
	%The evaluation of the error approximation is one of the crucial facts in numerical methods. It is well known that the estimation of the error depends on the regularity of the solution. 
	The regularity of solutions for SDEs is well understood in the literature see \cite{Jiang2018, lord2022, Tambue2019}. However, the regularity of the solution of SDAEs of type \eqref{equa1}, particularly when the coefficients of the equations satisfy only the local Lipschitz condition and $A(\cdot) $ depends on $t$ is still open to the best of our knowledge. 
	
	This paper aims to prove the well-posedness and regularity of the solution for nonlinear non-autonomous SDAEs under local Lipschitz conditions and the monotone conditions \eqref{equa3}.  In order to achieve our goal, we transform the initial SDAEs into ordinary SDEs with algebraic constraints (AEs).  Although we follow a similar approach to \cite{Renate2003}, where constant matrix  $A(t)=A$ is constant, we face many challenges. In order to tackle them, we use It\^o 's lemma to transform the equation with the variable $X$ into a new ordinary SDE with a new variable $u$. This is done using matrix transformations and the implicit function theorem. We show that the new SDE called inherent regular SDE satisfies the locally Lipschitz condition and the monotone conditions \eqref{equa3} on the new drift and the new diffusion coefficients.   The existence and uniqueness  of  $u$ is then ensured \cite{Guixin2011, Mao2007, Yong2015}.
	%Consequently, the solution exists and it is unique.
	Furthermore, we study the regularity of the solution of SDAE under relaxed assumptions.
	% that allow us to have a better order of convergence for the numerical simulation that we will address in a future paper.
	
	The paper is organized as follows.  Section 2 presents several basic notions and assumptions. Section 3 contains some classical existence, uniqueness, and regularity results for the solutions of global SDEs. In section 4, we prove the well-posedness of the solution of the SDAE \eqref{equa1} by transforming the initial equation \eqref{equa1} into an ordinary SDEs with algebraic constraints (AE). We end this section by showing that the solution $X$ of the equation \eqref{equa1}   satisfies, in addition,  the estimation $\mathbb{E}(\sup_{t\in \left[0, T\right]}\left|X(t)\right|^p)\leq C$ . Here $C> 0$ is a constant and $p\geq 2$. Finally, we give an example to illustrate our approach.
	
	\section{Notations and assumptions}
	Throughout this work,  $(\Omega, \mathcal{F},\mathbb{P})$ denotes a complete probability space with the natural 
	filtration $\left( \mathcal{F}_t\right)_{t\geq 0} $. We define,  $\left\|x \right\|^2=\sum_{i=1}^{n}\left| x_i\right|^2  $ , for any vector $x\in \mathbb{R}^n$ and   $\left| B \right|^2_F=\sum_{i=1}^{n}\sum_{j=1}^{m}\left|b_{i,j}\right| ^2 $ is the Frobenius norm, for any matrix $B=(b_{i,j})_{i,j=1}^{n, m}$.  We denote also by $\left\|\cdot\right\|_{\infty}$ the supremum norm for continuous functions. Moreover, for $A:t\mapsto  A(t),~t\in \left[ 0,T\right]$, we have  $\left\| A\right\|_{\infty}:=\max_{t\in \left[ 0,T\right]}\left| A(t)\right| _F$. Recall that, $a\lor b=\max(a,b)$ is the maximum between $a$  and $ b$,  and  $a\land b=\min(a,b)$ is the minimum between $a$ and $b$, for $a$, $b$ $\in \mathbb{R}$ .	
	
	We continue with the following hypotheses necessary for obtaining our first results. More precisely, we assume that:\\
	\textbf{Assumption 1.}\\
	\textbf{(A1.1)} The functions $f(\cdot, \cdot)$ and $g(\cdot, \cdot)$ satisfy the following monotone  condition  i.e. there exists $k>0$ such that:
	\begin{equation}\label{equa3}
		\left\langle (A^-(t)A(t)X)^T,A(t)^-f(t,X)\right\rangle +\dfrac{p-1}{2}\left|A^-(t)g(t,X) \right|^2_F \leq k(1+\left\|X \right\|^2 ),~X \in \mathbb{R}^n,~t \in \left[0,T\right].
	\end{equation}
	Here $A^-(\cdot)$ is the so-called pseudo-inverse of $A(\cdot)$ and $p\geq 2$.\\
	Note that in the case of normal SDEs, this condition is replaced by the following conditions:
	Suppose that there exists a positive constant $k>0$ such that:
	\begin{equation}\label{eqat}
		\left\langle X^T,f(t,X)\right\rangle +\dfrac{p-1}{2}\left|g(t,X) \right|^2_F \leq k(1+\left\|X\right\|^2 ), X\in \mathbb{R}^n, ~t\in \left[0,T\right].
	\end{equation}
	So, we will prove that the SDE obtained after the transformation of the initial SDAE satisfies this condition \eqref{eqat}. 
	
	\textbf{(A1.2)}	 	The functions $f(\cdot,\cdot)$ and $g(\cdot,\cdot)$ are locally Lipschitz with respect to $X$ i.e. for any $q > 0$,   there exists $L_q > 0$, such that:
	\begin{equation}\label{equa4}
		\left\| f(t,X)-f(t,Y)\right\|\lor	\left| g(t,X)-g(t,Y)\right|_F \leq L_q \left\|Y- X\right\|,~t\in \left[0,T\right],
	\end{equation}
	and  $ X, Y\in \mathbb{R}^n$ with $ \left\|X \right\|\lor \left\|Y \right\|<q $ and $t \in \left[0,T\right]$.\\
	%	Note that we can obtain more general results by using the following assumption for $p\geq 2$. \\
	\textbf{Assumption 2.} 
	%There exists $k>0$ such that:
	%$$\textbf{(A2.1)  } ~~~~~~\left\langle (A^-(t)A(t)X)^T,A(t)^-f(t,X)\right\rangle +\dfrac{p-1}{2}\left|A^-(t)g(t,X) \right|^2_F \leq k(1+\left\|X \right\|^2 ),~t\in \left[ 0, T \right],$$ $X\in \mathbb{R}^n$.\\
	%	Here $A^-(\cdot)$ is the so-called pseudo-inverse of $A(\cdot)$, $t\in\left[ 0, T \right].$\\
	In the following, we consider that the function $g(\cdot,\cdot)$ satisfies the super linear condition. This condition will be used only for the regularity of results. More precisely; we suppose that:  there exist $ c, r_2 >0$
	such that:
	\begin{equation*}
		\textbf{(A2.1)}~~~~~~~~~~~~~~~~~~~~~~~~~~~~~~~~~~	\left|g(t,X) \right|_F^2\leq c(1+\left\|X \right\|^{r_2} ), ~ X\in \mathbb{R}^n, ~t\in \left[0,T\right].
	\end{equation*} 
	
	Finally, our last hypothesis is: \\
	\textbf{(A2.2)} The matrix function , $P:\left[0,T\right]\to \mathcal{M}_{n\times n }(\mathbb{R})$  such that $P(t)=A^-(t)A(t)$ is of class $\mathcal{C}^1$ with respect to $t\in \left[0,T\right]$.\\
	
	The next section presents some background results on stochastic differential equations and several results concerning singular matrices.
	\section{Some classical results on stochastic differential equations and singular matrices }
	In this section, we present some important results concerning the existence and uniqueness of the solution for  SDEs under the local Lipschitz conditions. Moreover, we present several useful results for the study of differential-algebraic equations.
	\subsection{Stochastic differential equations}
	\begin{defi}\cite{Arnold1974,Gikhman1972,Kloeden1992}
		A stochastic differential equation is an equation of the form:
		\begin{equation}\label{key1}
			dX(t)=f(t,X(t))dt+g(t,X(t))dW(t) ,\qquad X(0)=X_0,~t\in \left[ 0,T\right].
		\end{equation} 
		Equivalently, we have the integral form:
		\begin{equation}\label{key2}
			X(t)-X_0=\int_{0}^{t}f(s,X(s))ds+\int_{0}^{t}g(s,X(s))dW(s), ~t\in \left[0,T \right] .
		\end{equation}
		%	Recall that the solution of equation \eqref{key1}  is a stochastic process.
		We continue with the definition of a solution of \eqref{key1} or  \eqref{key2}.
	\end{defi}
	\begin{defi}\cite{Thorstenbook2008}\label{def0}
		A strong solution of \eqref{key1} or  \eqref{key2} is a process $X(\cdot) = (X(t))_{t\in \left[0,T \right] }$ with continuous sample paths that respects the following conditions:\\
		i)  $X(\cdot)$ is adapted to the filtration $\left\lbrace \mathcal{F}_t\right\rbrace_{t\in \left[0,T\right] } $,\\
		ii) $\int_{0}^t|f_i(s,X(s))|ds< \infty$ a.s, for all i=1,...,n, $t\in \left[0,T\right]$.,\\
		iii) $\int_{0}^tg_{ij}^2(s,X(s))dw_j(s)< \infty$ a.s, for all i=1,...,n; j=1,...,m, $t\in \left[0,T\right]$,\\
		iv)  $X(\cdot) = (X(t))_{t\in \left[0,T \right] }$ satisfies the equation  \eqref{key1} or \eqref{key2}.
	\end{defi}
	Another important tool in stochastic analysis is the so-called It\^o process.
	
	\begin{defi}\cite{Mao2007}\label{def00}
		A one-dimensional It\^o process (or stochastic integral) is a continuous adapted process $(X(t))_{t\in \left[0,T \right] }$ of the form:
		
		\begin{equation}\label{ito}
			X(t)=X_0+\int_{0}^{t}f(s,X(s))ds+\int_{0}^{t}g(s,X(s))dW(s).
		\end{equation}
		Here  $f\in \mathbb{L}(\left[0,T \right],\mathbb{R} )$ and $g\in \mathbb{L}^2(\left[0,T \right],\mathbb{R} )$. We assert that $X(\cdot)$ possesses a stochastic differential $dX(\cdot)$ as described by equation \eqref{key1}.
	\end{defi}
	We would like to recall a fundamental result in stochastic theory, which holds significant importance.
	\begin{prop}[ \textbf{It\^o lemma}]\label{propito} \cite{mukam2015}
		Let $(\Omega,\mathcal{F}, \mathbb{P})$ be a complete probability space, $\{W(t)\}_{t\in  \left[0,T \right]}$ a one-dimensional Brownian motion and $K: \left[0,T \right] \times\mathbb{R}\to \mathbb{R}$ such that $K$ is once differentiable with respect  to the first variable $t$ and twice differentiable with respect to the second variable $X$. If $\{X(t)\}_{t\in  \left[0,T \right]}$ is an It\^o  process, then $\{K(t, X(t))\}_{t\in  \left[0,T \right]}$ is also an It\^o process and we have:
		\begin{align}\label{ito2}
			K(t,X(t))=&K(0,X(0)+\int_{0}^{t}\frac{\partial K}{\partial t}(s,X(s))ds+\int_{0}^{t}\frac{\partial K}{\partial X}(s,X(s))f(s,X(s))ds\nonumber\\
			&+\int_{0}^{t}\frac{\partial K}{\partial X}(s,X(s))g(s,X(s))dW(s)+\frac{1}{2} \int_{0}^{t}\frac{\partial^2 K}{\partial X^2}(s,X(s))g(s,X(s))^2ds, \nonumber\\              &      ~t\in\left[0,T \right].
		\end{align}
		Equivalently we have the differential form
		\begin{align*}
			dK(t,X(t))& =\frac{\partial K}{\partial t}(t,X(t))dt+\frac{\partial K}{\partial X}(t,X(t))f(t,X(t))dt+\frac{\partial K}{\partial X}(t,X(t))g(t,X(t))dW(t)\\
			& +\frac{1}{2}\frac{\partial^2 K}{\partial X^2}(t,X(t))g(t,X(t))^2dt, ~t\in \left[0,T \right].
		\end{align*}
	\end{prop}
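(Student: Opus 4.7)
The plan is to prove It\^o's lemma by approximating $K$ by its second-order Taylor expansion along a fine partition of $[0,t]$ and passing to the limit, exploiting the fact that the quadratic variation of Brownian motion on $[0,t]$ is $t$. First I would fix a partition $0 = t_0 < t_1 < \cdots < t_N = t$ of mesh $\delta_N \to 0$ and write the telescoping identity
\begin{equation*}
K(t,X(t)) - K(0,X(0)) = \sum_{k=0}^{N-1}\bigl[K(t_{k+1},X(t_{k+1})) - K(t_k,X(t_k))\bigr].
\end{equation*}
Each increment would be expanded by Taylor's formula to first order in $t$ and second order in $X$, with remainder that is $o(\Delta t_k)$ in $t$ and $o(|\Delta X_k|^2)$ in $X$, where $\Delta X_k = X(t_{k+1})-X(t_k)$.

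Next I would identify the limits of the four leading sums. The $\partial_t K(t_k,X(t_k))\,\Delta t_k$ sum converges pathwise, by continuity of $s \mapsto \partial_t K(s,X(s))$, to $\int_0^t \partial_t K(s,X(s))\,ds$. Using the representation $\Delta X_k \approx f(t_k,X(t_k))\,\Delta t_k + g(t_k,X(t_k))\,\Delta W_k$ coming from Definition \ref{def00}, the $\partial_X K(t_k,X(t_k))\,\Delta X_k$ sum splits into a drift contribution converging to $\int_0^t \partial_X K \cdot f\,ds$ and an It\^o contribution converging in $L^2$ to $\int_0^t \partial_X K \cdot g\,dW$ by the very definition of the stochastic integral for simple integrands. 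For the quadratic term $\tfrac{1}{2}\partial_X^2 K(t_k,X(t_k))(\Delta X_k)^2$, the drift$\times$drift and drift$\times$noise cross contributions vanish in $L^1$ as $\delta_N \to 0$, and the decisive step is the $L^2$ convergence
\begin{equation*}
\sum_{k=0}^{N-1} \partial_X^2 K(t_k,X(t_k))\, g(t_k,X(t_k))^2\,(\Delta W_k)^2 \;\longrightarrow\; \int_0^t \partial_X^2 K(s,X(s))\,g(s,X(s))^2\,ds,
\end{equation*}
which follows from the independence of the centered increments $(\Delta W_k)^2 - \Delta t_k$ across $k$ together with an It\^o-isometry computation on the adapted integrand $\partial_X^2 K \cdot g^2$.

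The main obstacle I expect is making the limit on the quadratic term rigorous and uniformly controlling all the remainders. I would handle this by first establishing the lemma under the auxiliary assumption that $K$ together with $\partial_t K$, $\partial_X K$, $\partial_X^2 K$ are bounded and that $f,g$ are bounded simple processes, where the estimates above are straightforward. The general statement is then obtained by a localization argument: approximate $f,g$ by simple processes in $\mathbb{L}$ and $\mathbb{L}^2$ respectively, introduce stopping times $\tau_n = \inf\{s\in[0,T] : |X(s)| \geq n\} \wedge T$, apply the bounded case to the stopped process, and let $n \to \infty$, using continuity of sample paths to ensure $\mathbb{P}(\tau_n < t)\to 0$. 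The differential form of the conclusion is then a mere rewriting of the integral identity in differential notation.
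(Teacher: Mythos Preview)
Your outline is the standard Taylor-expansion-plus-quadratic-variation proof of It\^o's lemma and is essentially correct as a sketch; the telescoping decomposition, the identification of the four limits, and the localization via stopping times are all the right ingredients. The paper, however, does not prove this proposition at all: it simply writes ``See \cite[Theorem 6.2 page 32]{Mao2007}'' and moves on, since It\^o's lemma is treated here as a classical background result rather than a contribution of the paper. So there is no meaningful comparison of approaches to make --- your proposal supplies a genuine (if compressed) argument where the paper only supplies a citation, and the argument you give is precisely the one found in standard references such as Mao's book.

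One small caveat on your sketch: the passage from the bounded/simple case to the general case requires a bit more care than you indicate, because the hypotheses here only ask that $K$ be once differentiable in $t$ and twice in $X$, not that these derivatives be continuous. In practice It\^o's formula is stated and proved for $K\in C^{1,2}$, and without continuity of $\partial_X^2 K$ the control of the Taylor remainder $o(|\Delta X_k|^2)$ uniformly over the partition is delicate. This is a wrinkle in the statement as written rather than in your method; if you intend to carry out the proof in full you should either assume $C^{1,2}$ explicitly or note that the cited reference does so.
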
 
	\begin{prev}
		See \cite[Theorem 6.2 page 32]{Mao2007} 
	\end{prev}\\
	Let us recall that if the coefficients of equation \eqref{key1} satisfy condition (A1.2) and condition \eqref{eqat} (with $p=2$) as stated in Assumption 1, then according to \cite[Theorem 3.5, page 58]{Mao2007}, the solution $X(\cdot)$ of equation \eqref{key1} exists and is unique.
	
	\begin{theo}\label{theoexi}
		%Let $\mathbb{E}(\left\|X_0\right|^{2})< \infty$. 
		Assume that the monotone condition \eqref{eqat} (with p=2) and the local Lipschitz condition  \eqref{equa4} hold and $\mathbb{E}(\left\|X_0\right|^{2})< \infty$. Then there exists  an unique solution $X(\cdot)\in \mathcal{M}^2([0,T];\mathbb{R}^n)$\footnote{ $\mathcal{M}^2\left( \left[0,T \right];\mathbb{R}^n \right) $ is the family of processes $\left\lbrace X(t) \right\rbrace_{0\leq t\leq T} $ in $\mathbb{L}^2\left(\left[0,T \right]; ~\mathbb{R}^n \right) $ such that $\mathbb{E}\left(\int_{0}^{T}\left|X(t) \right|^2  dt\right)<\infty$  .}  of the equation  \eqref{key1} or  \eqref{key2}. Moreover, we have
		\begin{equation}\label{eq}
			\mathbb{E}\left( \sup_{t\in\left[0,T \right] }\left|X(t) \right|^{2}\right)  \leq C,
		\end{equation} where $C>0$ is a constant depending on $X_0$.
	\end{theo}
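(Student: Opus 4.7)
The plan is to combine a truncation/localisation argument that turns the locally Lipschitz problem into a sequence of globally Lipschitz problems with a single a priori moment estimate, obtained from It\^o's lemma and the monotone condition~\eqref{eqat}, that prevents explosion. First I would define, for every integer $q \geq 1$, the truncation $\pi_q(x) = x \cdot (\|x\|\land q)/\|x\|$ for $x \neq 0$ and $\pi_q(0)=0$, and set $f_q(t,x)=f(t,\pi_q(x))$, $g_q(t,x)=g(t,\pi_q(x))$. By \textbf{(A1.2)}, $f_q$ and $g_q$ are globally Lipschitz in $x$ with constant $L_q$, so the classical theory (Picard iteration, as in \cite[Theorem 3.1]{Mao2007}) produces a unique $X_q(\cdot)\in \mathcal{M}^2([0,T];\mathbb{R}^n)$ solving~\eqref{key1} with coefficients $(f_q,g_q)$.

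Next I introduce the stopping time $\tau_q=\inf\{t\in[0,T]:\|X_q(t)\|\geq q\}\wedge T$. Because $\pi_q(x)=x$ whenever $\|x\|\leq q$, the process $X_q$ satisfies~\eqref{key1} with the original coefficients up to $\tau_q$. By pathwise uniqueness for the truncated equations, $X_{q'}(t)=X_q(t)$ for all $t\leq \tau_q$ whenever $q'\geq q$, so the stopping times are non-decreasing and one can paste the $X_q$ into a single process $X$ defined up to $\tau_\infty=\lim_{q\to\infty}\tau_q$. The real work is to show $\tau_\infty=T$ almost surely, and this is the main obstacle: local Lipschitz alone allows explosion, so the monotone condition~\eqref{eqat} is what must be exploited.

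To control the stopped process I would apply It\^o's lemma (Proposition~\ref{propito}, extended componentwise) to $K(x)=\|x\|^2$ on $X_q$, take expectation so that the martingale part vanishes, and use \eqref{eqat} with $p=2$ to obtain
\begin{equation*}
\mathbb{E}\|X_q(t\wedge\tau_q)\|^2 \leq \mathbb{E}\|X_0\|^2 + \int_0^t \bigl(2k + 2k\,\mathbb{E}\|X_q(s\wedge\tau_q)\|^2\bigr)\,ds,
\end{equation*}
and Gronwall yields $\mathbb{E}\|X_q(t\wedge\tau_q)\|^2\leq C_1$ uniformly in $q$. Chebyshev then gives $q^2\,\mathbb{P}(\tau_q<T)\leq C_1$, so $\mathbb{P}(\tau_q<T)\to 0$, i.e.\ $\tau_q\uparrow T$ a.s. Consequently $X$ is defined on all of $[0,T]$, is adapted with continuous paths, and satisfies the integrated equation~\eqref{key2}. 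Uniqueness on $[0,T]$ follows from pathwise uniqueness for each truncated equation together with the fact that any two solutions must agree up to the corresponding exit times, which tend to $T$ by the same estimate.

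Finally, to obtain the supremum bound~\eqref{eq} I would return to the It\^o expansion of $\|X(t\wedge\tau_q)\|^2$, take the supremum over $[0,t]$ before the expectation, and apply the Burkholder--Davis--Gundy inequality to the stochastic integral term $\int_0^{\cdot} 2X(s)^{T}g(s,X(s))\,dW(s)$. BDG produces a term of the form $C\,\mathbb{E}\bigl(\int_0^{t}\|X(s)\|^2 |g(s,X(s))|_F^2\,ds\bigr)^{1/2}$, which after Young's inequality is absorbed partly into $\tfrac{1}{2}\mathbb{E}\sup_{s\leq t}\|X(s)\|^2$ on the left and partly into a $\int_0^t \mathbb{E}\sup_{r\leq s}\|X(r)\|^2\,ds$ term handled by the monotone bound~\eqref{eqat}. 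Gronwall's inequality and passing to the limit $q\to\infty$ (Fatou) then deliver \eqref{eq} with a constant $C$ depending on $k$, $T$ and $\mathbb{E}\|X_0\|^2$.
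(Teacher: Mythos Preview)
Your proposal is correct and is exactly the standard truncation--stopping-time--Gronwall argument (followed by BDG for the supremum bound) that the paper invokes by reference: the paper's own proof simply cites \cite[Theorem~3.5, p.~58]{Mao2007} for existence/uniqueness and \cite[Lemma~3.2, p.~51]{Mao2007} for the estimate~\eqref{eq}, and your outline reproduces that argument faithfully.
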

	\begin{proof}
		For the proof of the existence and uniqueness of the solution, see  \cite[Theorem 3.5 page 58]{Mao2007}. For the estimation  \eqref{eq}, we can use the same approach as in \cite[ Lemma 3.2 page 51]{Mao2007}.
	\end{proof}
	
	Moreover, we have the following result about the regularity of the solution $X(\cdot)$:
	\begin{theo}\label{theoexi1}
		Let $p\geq2$ and $X_0\in \mathcal{M}^p(\mathbb{R}^n)$. Assume that Assumption 1 and Assumption 2 hold. Then the solution $X(\cdot)$ of the equation \eqref{key1} or  \eqref{key2} satisfies: % belongs to $ \mathcal{M}^p(\left[0, T \right], ~\mathbb{R}^n)$. Additionally, we have: 
		\begin{equation}
			\mathbb{E}\left( \sup_{t\in\left[0,T \right] }\left\|X(t) \right\|^{p-r_2+2} \right) \leq C.
		\end{equation}
		Here $C>0$ and $r_2$   are constants numbers and $r_2<p$ is given by Assumption 2.
	\end{theo}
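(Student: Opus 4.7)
The plan is to apply Itô's formula to the smooth Lyapunov function $\varphi(X)=(1+\|X\|^2)^{q/2}$ with $q:=p-r_2+2$ (note $q>2$ since $r_2<p$), and then to close a Gronwall-type estimate after controlling the resulting martingale term by the Burkholder-Davis-Gundy inequality together with Young's inequality. Throughout, I would localize by the stopping times $\tau_n:=\inf\{t\geq 0\ :\ \|X(t)\|\geq n\}\wedge T$ so that every stochastic integral is a genuine martingale and every expectation is finite; the bounds obtained will be uniform in $n$, so the final inequality will follow by Fatou's lemma on letting $n\to\infty$.

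As a preliminary, I would first show the plain moment bound $\sup_{t\in[0,T]}\mathbb{E}[(1+\|X(t)\|^2)^{p/2}]\leq C_p$. Applying Itô to $(1+\|X\|^2)^{p/2}$, the drift factors as $p(1+\|X\|^2)^{p/2-1}[\langle X,f\rangle+\tfrac{p-1}{2}|g|_F^2]$, which by the monotone condition \eqref{eqat} is dominated by $pk(1+\|X\|^2)^{p/2}$, and Gronwall combined with $X_0\in\mathcal{M}^p$ yields the bound. This is the direct extension of the reasoning behind Theorem~\ref{theoexi} from $p=2$ to arbitrary $p\geq 2$.

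For the main estimate I now apply Itô to $\varphi(X)=(1+\|X\|^2)^{q/2}$. After bounding the Hessian term by $\tfrac{q(q-1)}{2}(1+\|X\|^2)^{q/2-1}|g|_F^2$, the drift is dominated by $q(1+\|X\|^2)^{q/2-1}[\langle X,f\rangle+\tfrac{q-1}{2}|g|_F^2]$. The key algebraic step is the splitting
\[
\tfrac{q-1}{2}|g|_F^2=\tfrac{p-1}{2}|g|_F^2+\tfrac{2-r_2}{2}|g|_F^2,
\]
where the first piece is absorbed by \eqref{eqat}, while the second, using (A2.1) and $1+\|X\|^{r_2}\leq 2(1+\|X\|^2)^{r_2/2}$, is dominated by $C(1+\|X\|^2)$ when $r_2\leq 2$ and is non-positive when $r_2>2$. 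Either way, $\langle X,f\rangle+\tfrac{q-1}{2}|g|_F^2\leq C(1+\|X\|^2)$, so that
\[
d\varphi(X(t))\leq C\varphi(X(t))\,dt + dM(t),\qquad M(t):=\int_0^t q(1+\|X(s)\|^2)^{q/2-1}X(s)^{T}g(s,X(s))\,dW(s).
\]

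The hard part, where the super-linear exponent $r_2$ really enters, is controlling $\mathbb{E}\sup_{t\leq T}|M(t)|$. By BDG, $\|X^{T}g\|^2\leq\|X\|^2|g|_F^2$ and (A2.1),
\[
\mathbb{E}\sup_{t\leq T}|M(t)|\leq C\,\mathbb{E}\Bigl[\Bigl(\int_0^T (1+\|X(s)\|^2)^{q-1+r_2/2}\,ds\Bigr)^{1/2}\Bigr].
\]
The decisive observation is the identity $q-1+\tfrac{r_2}{2}=\tfrac{q}{2}+\tfrac{p}{2}$, so the integrand factors as $\varphi(X(s))\cdot(1+\|X(s)\|^2)^{p/2}$. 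Pulling $\sqrt{\sup_{s\leq T}\varphi(X(s))}$ outside the time integral and applying Young's inequality $ab\leq\varepsilon a^2+(4\varepsilon)^{-1}b^2$ produces
\[
\mathbb{E}\sup_{t\leq T}|M(t)|\leq \varepsilon\,\mathbb{E}\sup_{t\leq T}\varphi(X(t)) + C_\varepsilon\int_0^T\mathbb{E}[(1+\|X(s)\|^2)^{p/2}]\,ds,
\]
and the last integral is finite by the preliminary step. Choosing $\varepsilon<\tfrac12$ allows absorption of the supremum term into the left-hand side, after which the integral form of Gronwall's inequality applied to $t\mapsto\mathbb{E}\sup_{s\leq t}\varphi(X(s))$ concludes the proof. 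The principal obstacle is precisely this absorption: BDG forces a bound by a quantity of order $(1+\|X\|^2)^{q-1+r_2/2}$, which generically grows \emph{faster} than $\varphi$, and only the precise relation $q=p-r_2+2$ makes the excess factor exactly as the $p$-moment already controlled in the preliminary step.
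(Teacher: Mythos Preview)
Your proposal is correct and follows essentially the same route as the result cited by the paper (\cite[Lemma~4.2]{mao2015truncated}): It\^o's formula on a power of $1+\|X\|^2$, the monotone condition \eqref{eqat} for the drift part, BDG combined with the growth bound (A2.1) and a Young-type splitting to absorb the supremum of the martingale term, all closed via Gronwall on top of the pointwise $p$-th moment bound. This is exactly the scheme the paper itself reproduces in detail for the SDAE analogue in Theorem~\ref{theo3}; your factorisation through the identity $q-1+\tfrac{r_2}{2}=\tfrac{q}{2}+\tfrac{p}{2}$ is in fact a slightly cleaner variant of the two-step Young inequality used there.
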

	\begin{proof}
		For the proof see \cite[Lemma 4.2]{mao2015truncated}.
	\end{proof}
	We continue presenting some important results concerning the singular matrices.
	\subsection{Some results on singular matrices}
	We start by introducing the notion of a pseudo-inverse of a matrix and some consequences of this definition.
	%Another important notion in this paper is the pseudo-inverse of a matrix.\\
	Pseudo inverse matrix is a generalization of the inverse matrix. Indeed, if B is a non-singular matrix, then the pseudo inverse coincides with the inverse of the matrix B. However, if B is singular, the pseudo inverse is defined as follows:
	\begin{defi}\label{def2}  Let $m,n\in\mathbb{N}^*$ with $m,n>0.$ A pseudo inverse of  $B\in \mathcal{M}_{m\times n}(\mathbb{R})$ is defined by a matrix $ B^-\in\mathcal{M}_{n\times m}(\mathbb{R})$ satisfying the following criteria:\\
		i)  $BB^-B=B$, \\
		ii) $B^-BB^-=B^-$,\\
		Moreover the matrix $ B^-$ is unique if the following properties satisfied:\\
		iii) $(BB^-)^T=BB^-$,\\
		iv)  $(B^-B)^T=B^-B$.\\
		Here $B^T$ is the transposed of the matrix $B$.
		For more information on the pseudo-inverse matrices, see, for example, \cite{San2019} and \cite{James2007}.
	\end{defi}
	\begin{prop}\label{prop1}
		Let $B\in \mathcal{M}_{n\times n}(\mathbb{R})$ be a matrix and  $B: \mathbb{R}^n\to \mathbb{R}^n$ the associated linear operator. Then there exists a  projector matrix $Q$ onto $Ker(B)$ such that $Im(Q)=Ker(B)$. Moreover, we can find another projector matrix $R$ along $Im(B)$ such that $RB=0$.
	\end{prop}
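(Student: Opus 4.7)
The plan is to exhibit the two projectors explicitly using the pseudo-inverse $B^{-}$ already introduced in Definition 2; this turns the proposition into a short verification using properties (i)--(ii) of $B^{-}$, without having to invoke any abstract choice of complementary subspace.

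First I would define
\[
Q := I - B^{-}B, \qquad R := I - B B^{-}.
\]
The idempotency of both matrices follows directly from the pseudo-inverse identities. For $Q$, property (ii) of Definition 2 gives $B^{-}B B^{-}B = (B^{-}B B^{-})B = B^{-}B$, so
\[
Q^{2} = (I-B^{-}B)^{2} = I - 2B^{-}B + B^{-}B B^{-}B = I - B^{-}B = Q.
\]
An entirely analogous computation, using $B B^{-} B B^{-} = (B B^{-} B)B^{-} = B B^{-}$ via property (i), shows that $R^{2}=R$.

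Next I would identify the image of $Q$ with $\mathrm{Ker}(B)$. One inclusion is immediate: property (i) yields $B Q = B - B B^{-} B = B - B = 0$, hence $\mathrm{Im}(Q)\subseteq \mathrm{Ker}(B)$. For the converse, if $x\in \mathrm{Ker}(B)$, then $B^{-}Bx = B^{-}(Bx) = 0$, so $x = Qx \in \mathrm{Im}(Q)$. This gives $\mathrm{Im}(Q)=\mathrm{Ker}(B)$, and together with $Q^{2}=Q$ proves that $Q$ is a projector onto $\mathrm{Ker}(B)$. Symmetrically, $R B = B - B B^{-} B = 0$ shows $\mathrm{Im}(B)\subseteq \mathrm{Ker}(R)$, while any $x\in \mathrm{Ker}(R)$ satisfies $x = B B^{-} x \in \mathrm{Im}(B)$, so $\mathrm{Ker}(R)=\mathrm{Im}(B)$; i.e., $R$ is a projector along $\mathrm{Im}(B)$, and in particular $R B = 0$.

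I do not expect any serious obstacle: the entire argument is a manipulation of the defining identities of the pseudo-inverse. The only mild subtlety is being careful that the inclusions $\mathrm{Ker}(B)\subseteq \mathrm{Im}(Q)$ and $\mathrm{Ker}(R)\subseteq \mathrm{Im}(B)$ are argued in both directions, since the statement asks for equality of subspaces and not merely for the existence of projectors vanishing on or hitting the respective spaces. Note that this construction does not require the additional uniqueness properties (iii)--(iv) from Definition 2; it uses only the two algebraic identities $B B^{-} B = B$ and $B^{-} B B^{-} = B^{-}$.
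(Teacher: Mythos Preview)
Your argument is correct and complete. It is, however, genuinely different from the paper's proof sketch. The paper merely appeals to the decomposition $\mathrm{Ker}(B)\oplus\mathrm{Im}(B)=\mathbb{R}^n$ to produce the projectors abstractly; as written that formula is not valid for arbitrary $B$ (any nonzero nilpotent matrix has $\mathrm{Ker}(B)\cap\mathrm{Im}(B)\neq\{0\}$), so the paper's justification is at best incomplete, whereas your construction via $Q=I-B^{-}B$ and $R=I-BB^{-}$ works for every $B$ and uses only the two algebraic identities (i)--(ii) of the pseudo-inverse. A further advantage of your route is that it produces precisely the projectors the paper actually employs later on: in the main proof one takes $P(t)=A^{-}(t)A(t)$ and $I-R(t)=A(t)A^{-}(t)$, i.e.\ exactly your $I-Q$ and $I-R$ specialized to $B=A(t)$, so your concrete choice dovetails with the subsequent analysis rather than leaving a nonconstructive existence statement to be matched up afterwards.
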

	\begin{prev}
		The proof of the Proposition \ref{prop1} uses the well-known formula  $$KerB\bigoplus ImB=\mathbb{R}^n.$$
		Here $\bigoplus$ is the direct sum between two sub-spaces (see \cite{rakic2018note}) 
	\end{prev}\\
	\begin{prop}\label{prop2}
		Let $B \in \mathcal{M}_{n\times n}(\mathbb{R})$ a singular matrix, $Q \in \mathcal{M}_{n\times n}(\mathbb{R})$ is a projector given by Proposition \ref{prop1} such that $Im(Q)=Ker(B)$. Then there exists a suitable non-singular matrix $D \in \mathcal{M}_{n\times n}(\mathbb{R})$ such that $DB=I-Q=P$\footnote{Observe that $P=I-Q$ is also a projector.}.
	\end{prop}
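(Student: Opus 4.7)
The plan is to construct $D$ explicitly by setting $D := (B+Q)^{-1}$; the proof then reduces to two verifications: that $B+Q$ is non-singular, and that the identity $DB = P$ follows automatically from this choice.

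For the identity $DB = P$, I would first establish $(B+Q)P = B$ and then left-multiply by $D$. Since $Q$ is a projector, $Q^2 = Q$ gives $QP = Q(I-Q) = 0$. Moreover, the hypothesis $\mathrm{Im}(Q) = \mathrm{Ker}(B)$ says precisely that $BQ = 0$, hence $BP = B(I-Q) = B$. Summing, $(B+Q)P = BP + QP = B$, and multiplying on the left by $D$ yields $P = (B+Q)^{-1} B = DB$, as required.

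For the invertibility of $B+Q$, I would show $\mathrm{Ker}(B+Q) = \{0\}$. If $(B+Q)v = 0$, then $Bv = -Qv$, with $Bv \in \mathrm{Im}(B)$ and $-Qv \in \mathrm{Im}(Q) = \mathrm{Ker}(B)$. The direct-sum decomposition $\mathbb{R}^n = \mathrm{Ker}(B) \oplus \mathrm{Im}(B)$ used in the proof of Proposition \ref{prop1} forces $Bv = 0$ and $Qv = 0$. From $Bv = 0$ we obtain $v \in \mathrm{Ker}(B) = \mathrm{Im}(Q)$, and since $Q$ is the identity on its image ($Q^2 = Q$), this gives $Qv = v$; combined with $Qv = 0$, we conclude $v = 0$, so $B+Q$ is indeed non-singular.

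The only genuine obstacle is this invertibility step, which leans on the direct-sum splitting $\mathrm{Ker}(B) \oplus \mathrm{Im}(B) = \mathbb{R}^n$ from Proposition \ref{prop1}—essentially the index-1 condition and already built into the standing framework of the paper. A more pedestrian alternative would be to fix a basis adapted to the splitting $\mathbb{R}^n = \mathrm{Ker}(B) \oplus \mathrm{Im}(P)$ and define $D$ block-wise (as a right inverse of $B$ restricted to a complement of $\mathrm{Ker}(B)$, extended by any isomorphism from a chosen complement of $\mathrm{Im}(B)$ onto $\mathrm{Ker}(B)$); but the shortcut $D = (B+Q)^{-1}$ is both more compact and more intrinsic, and avoids any explicit choice of bases.
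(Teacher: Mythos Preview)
Your argument is correct and complete; both the invertibility of $B+Q$ and the identity $DB=P$ go through exactly as you say, using only $BQ=0$, $Q^2=Q$, and the trivial intersection $\ker B\cap\mathrm{Im}\,B=\{0\}$ inherited from Proposition~\ref{prop1}.

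The paper, however, does not use your closed-form choice $D=(B+Q)^{-1}$. Instead it follows the ``pedestrian alternative'' you sketch at the end: it fixes a basis $\{e_i\}_{i=1}^n$ adapted to the splitting $\mathbb{R}^n=\ker B\oplus\mathrm{Im}\,B$, defines an operator $L$ by $Lz=z$ on $\ker B$ and $Lw=Bw$ on $\mathrm{Im}\,B$, checks that $L$ is bijective because $\{Be_i\}_{i>q}$ is again a basis of $\mathrm{Im}\,B$, and then verifies $L^{-1}B=P$ by computing on an arbitrary $y=z+w$. Your route is shorter, coordinate-free, and yields an explicit formula that one can actually compute with (indeed, for the worked example in Section~5 your $D=(B+Q)^{-1}$ would give the matrix directly). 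The paper's construction, on the other hand, makes the geometric content more visible---$L$ is literally the identity on $\ker B$ glued to $B$ on a complement---and produces a $D$ that agrees with yours precisely when $\ker Q=\mathrm{Im}\,B$; for other choices of the projector $Q$ the two constructions give different (but equally valid) non-singular matrices $D$.
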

	\begin{proof}
		The proof of Proposition \ref{prop2}  is proposed  at  Appendix \ref{Appendix}.
	\end{proof}
	
	Note that at this point, we have the most important material to start the main part of this article. 
	\section{Non-autonomous stochastic differential-algebraic equations: Main results }
	In the first step, we present results concerning the well-posedness of non-autonomous SDAEs under locally Lipschitz conditions. Secondly, we address the regularity of the solution.
	\subsection{Existence and uniqueness results for non-autonomous SDAEs}

	Recall that, under suitable hypothesis, the non-autonomous stochastic differential algebraic equation (SDAE) \eqref{equa1} can be split into two different equations:  algebraic equation  (AE) and stochastic differential equation (SDE). The AE is called the constraints of the SDAE, and the solution of AE  is called algebraic variables. In this paper, we consider SDAEs of index 1.
	
	\begin{defi}\label{def1}
		The SDAE (\ref{equa1}) is said to be of index-1  if the noise sources do not appear in the constraints, and the constraints (AE) are globally uniquely solvable. 
	\end{defi}  A suitable hypothesis for having this assumption is, for example, that $Img(t,X)\subseteq ImA(t)$, for all $X\in \mathbb{R}^n$ and $t\in \left[0,T\right] $.
	
	We denote  by  $J(t,X)=A(t)+R(t)f_X'(t,X)$,   $t\in \left[0,T\right] $ and $X\in \mathbb{R}^n$ the Jacobian matrix  of the constraint  $(\ref{equa6})$ 
	where $R(t)$ is the projector matrix along $ImA(t)$ with $R(t)A(t)=0$,  $t\in \left[0,T\right] $ (see Proposition \ref{prop1}  and \cite{Renate2003} for more explanations).
	\begin{defi} 
		The Jacobian $J(\cdot,\cdot)$ possesses a globally bounded inverse if there exists a positive constant $N$ such that $\left\| J(t,X)^{-1}\right\| \leq N$,  $X\in \mathbb{R}^n$, and  $t\in \left[0,T\right] $.
	\end{defi}
	The first main result of this section is given in the following theorem.
	\begin{theo}\label{theo1} [Main result 1] 
		Assume  that the equation \eqref{equa1}  is an index-1 SDAE and the Jacobian matrix $J(\cdot,\cdot)$ is continuous and possesses a globally bounded inverse.
		
		Assume  also that the conditions \eqref{equa3} (with p=2) and \eqref{equa4} are satisfied and $\mathbb{E}\left\|X_0 \right\|^2<\infty$.
		Then there exists a unique process $X(\cdot)$  solution of \eqref{equa1}  such that  $X(\cdot) \in \mathcal{M}^2([0,T];\mathbb{R}^n)$. Moreover we have,
		\begin{equation}\label{equa14}
			\mathbb{E}\left( \sup_{t\in\left[0,T \right] }\left\|X(t) \right\|^2 \right) <\infty.
		\end{equation}
	\end{theo}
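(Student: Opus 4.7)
The plan is to decouple the singular system \eqref{equa1} into a dynamic part and an algebraic constraint by means of the projectors $P(t) := A^-(t)A(t)$ and $Q(t) := I - P(t)$, and to reduce the well-posedness problem to a classical existence/uniqueness statement for an ordinary SDE in the new variable $u(t) := P(t)X(t)$, to which Theorem \ref{theoexi} can be applied. The guiding idea is that $A(t)Q(t) = 0$ forces the component $v(t) := Q(t)X(t)$ to be determined by an algebraic equation inherited from \eqref{equa1}, while $u(t)$ carries the genuine stochastic dynamics.

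First I would multiply \eqref{equa1} on the left by $A^-(t)$ to extract the differential relation
\begin{equation*}
P(t)\,dX(t) = A^-(t) f(t,X(t))\,dt + A^-(t) g(t,X(t))\,dW(t),
\end{equation*}
and on the left by the projector $R(t)$ (which satisfies $R(t)A(t)=0$) to extract the constraint $R(t)f(t,X(t))\,dt + R(t)g(t,X(t))\,dW(t) = 0$. Since \eqref{equa1} is of index $1$ the noise does not enter the constraint, i.e.\ $R(t)g(t,X(t))=0$, so the constraint reduces to the deterministic identity $R(t)f(t, u+v) = 0$ with $v \in \mathrm{Ker}\,A(t)$. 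The derivative of $v \mapsto R(t)f(t,u+v)$ equals, in a suitable sense, the Jacobian $J(t,X) = A(t) + R(t)f'_X(t,X)$, whose inverse is bounded by hypothesis; a Hadamard-type global implicit function argument then produces a locally Lipschitz map $h$ such that $v = h(t,u)$, with Lipschitz constant controlled by $\|J^{-1}\|_\infty$ and the local Lipschitz constant of $f$.

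Next, applying Itô's lemma (and using the $C^1$-regularity of $P$ from (A2.2)) to $u(t) = P(t)X(t)$, I would obtain the inherent SDE
\begin{equation*}
du(t) = \tilde f(t,u(t))\,dt + \tilde g(t,u(t))\,dW(t), \qquad u(0) = P(0)X_0,
\end{equation*}
with $\tilde f(t,u) := \dot P(t)(u + h(t,u)) + A^-(t) f(t, u + h(t,u))$ and $\tilde g(t,u) := A^-(t) g(t, u + h(t,u))$. I would then verify that $\tilde f, \tilde g$ are locally Lipschitz in $u$, by composing the local Lipschitz property \eqref{equa4} of $f,g$ with the Lipschitz control on $h$, and using boundedness on $[0,T]$ of $A^-(\cdot)$ and $\dot P(\cdot)$. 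The crucial step is that the classical monotone condition \eqref{eqat} (with $p=2$) holds for $\tilde f, \tilde g$: writing $X = u + h(t,u)$ and noting that $P(t)X = u$, the dominant terms $\langle u^T, A^-(t)f(t,X)\rangle + \tfrac{1}{2}|A^-(t)g(t,X)|_F^2$ are bounded by $k(1+\|X\|^2)$ via hypothesis \eqref{equa3}, and the linear-growth bound $\|h(t,u)\| \leq C(1+\|u\|)$ converts $\|X\|^2$ into $C'(1+\|u\|^2)$; the auxiliary term $\langle u^T, \dot P(t)(u+h(t,u))\rangle$ is absorbed by Young's inequality.

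With these verifications Theorem \ref{theoexi} yields a unique $u(\cdot) \in \mathcal{M}^2([0,T];\mathbb{R}^n)$ satisfying $\mathbb{E}\bigl(\sup_{t\in[0,T]}\|u(t)\|^2\bigr) < \infty$. I would then define $X(t) := u(t) + h(t,u(t))$ and check that it solves \eqref{equa1}: the algebraic constraint is satisfied by the very definition of $h$, while multiplying the inherent equation on the left by $A(t)$ and using $A(t)P(t) = A(t)$ together with $A(t)\dot P(t) u = -\dot A(t) P(t) u + \ldots$ (coming from differentiating $A(t)P(t) = A(t)$) recovers $A(t)dX(t)$. Uniqueness of $X$ follows from that of $u$ and the single-valuedness of $h$, and the bound \eqref{equa14} is inherited from the $\mathcal{M}^2$-estimate on $u$ via the linear growth of $h$. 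The main obstacle I anticipate is the global implicit function step: upgrading pointwise invertibility of $J$ to a uniformly Lipschitz, globally defined $h(t,u)$ whose composition with $f$ and $g$ still falls inside the scope of Theorem \ref{theoexi}, and controlling the drift perturbation $\dot P(t)(u+h(t,u))$ so that the monotone estimate \eqref{eqat} is preserved.
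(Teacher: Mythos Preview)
Your approach is the same as the paper's: decouple via the projectors $P,Q,R$, solve the algebraic constraint by a global implicit function theorem, derive the inherent SDE for $u=P(t)X(t)$ via It\^o's formula, verify local Lipschitz and monotone conditions for the reduced coefficients, and invoke Theorem~\ref{theoexi}. Two points deserve attention.

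First, the constraint should be written as $A(t)v + R(t)f(t,u+v)=0$ rather than $R(t)f(t,u+v)=0$ alone. The added term $A(t)v$ vanishes on $\mathrm{Ker}\,A(t)$, so it changes nothing for $v\in\mathrm{Im}\,Q(t)$, but it makes the $v$-derivative equal to $J(t,X)=A(t)+R(t)f'_X(t,X)$ on all of $\mathbb{R}^n$; without it the derivative is only $R(t)f'_X$, which is singular and your ``in a suitable sense'' remains unjustified.

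Second, and more importantly, your claim that $h$ is only locally Lipschitz (with constant depending on the local Lipschitz constant $L_q$ of $f$) is too weak to support the linear-growth bound $\|h(t,u)\|\le C(1+\|u\|)$ you later invoke for the monotone estimate. The paper resolves the obstacle you flag by the identity
\[
h'_u(t,u)=-J^{-1}R f'_X = -(I - J^{-1}A),
\]
obtained from $I=J^{-1}J=J^{-1}A+J^{-1}Rf'_X$. Since $\|J^{-1}\|\le N$ by hypothesis, this gives $|h'_u|_F\le n+N\|A\|_\infty$ \emph{independently of} $f'_X$, so $h$ is globally Lipschitz. This is precisely what makes the monotone condition and the final $\mathcal{M}^2$ and $\sup$-estimates go through; your proof sketch is otherwise complete.
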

	\begin{prev}	For the proof, our approach is inspired by the one used in \cite{Renate2003}\footnote{Note that in \cite{Renate2003} the coefficients are global Lipschitz and $A$ is a constant singular matrix.}. More precisely,  we transform the SDAE \eqref{equa1} into the inherent regular SDE \eqref{equa10a} with local Lipschitz conditions. To tackle challenges, we use the It\^o formula to reduce the solution $X(\cdot)$ of SDAE \eqref{equa1} to the solution  $u(\cdot)$  of a new SDE defined by \eqref{equa10a}. Finally, we use some results from classical  SDE to finalize the proof.\\
		The proof is divided into four parts:\\
		
		\underline{\textbf{Part 1: Transformation of the SDAE \eqref{equa1} into the inherent regular SDE \eqref{equa10a}}}
		
		We start by showing that we can distinguish the differential and algebraic solution components using special projectors.  Recall that Proposition \ref{prop1} provides the existence of two projectors $Q(t)$ and $R(t)$ such that  $ImQ(t)=KerA(t)$, and $R(t)A(t)=0$, $t\in \left[0, T\right] $.
		Therefore, any process  $X(\cdot)$  can be written as: 
		\begin{align}\label{equa5}
			X(t)&=P(t)X(t)+Q(t)X(t)=u(t)+v(t),\\
			& \quad u(t)\in ImP(t), \quad v(t)\in ImQ(t) ,\quad t\in\left[0,T \right]\nonumber.
		\end{align}
		
		We continue by multiplying the equation (\ref{equa1})  by the matrix $R(\cdot)$. Recall that $R(t)g(t, X(t))=0$  ,  $t\in \left[0, T \right] $ (see Definition \ref{def1}). Moreover, we have $A(t)v(t)=0$, $t\in \left[0, T \right] $  using equation $\eqref{equa5}$. Consequently, we obtain the following constraints:
		\begin{equation}\label{equa6}
			A(t)v(t)+R(t)f(t,u(t)+v(t))=0, ~~t\in \left[0, T \right]. 
		\end{equation}
		Recall that by Theorem \ref{theo1} the Jacobian matrix of $(\ref{equa6})$ possesses a globally bounded inverse. Consequently, there exists an unique solution of equation (\ref{equa6}) denoted by $v(\cdot)=\hat{v}(\cdot,u(\cdot))$ (see  Theorem 7 in \cite{idczak2014global}).  Additionally, the equality $(\ref{equa5})$ becomes
		\begin{equation}\label{equa7}
			X(t)=u(t)+\hat{v}(t,u(t)),\quad  t\in [0,T].
		\end{equation}
		
		We continue by multiplying the equation $(\ref{equa1})$ by the projector matrix $I-R(\cdot)$ and  we have
		\begin{equation}\label{equa8}
			A(t)dX(t)=(I-R(t))f(t,X(t))dt+g(t,X(t))dW(t), ~~ t\in [0,T]	.
		\end{equation}
		Indeed, note that we use the fact that:  $$R(t)A(t)=0\text{ and } R(t)g(t,X(t))=0, ~t\in \left[0,T \right].$$
		Let $D(\cdot)$ be a non-singular matrix provided by Proposition \ref{prop2}. Then we have  $D(t)A(t)=P(t)$ and  $A^-(t)=D(t)(I-R(t))$ is the  pseudo inverse of a matrix A(t) for all $ t\in [0,T]$. Moreover $A^-(t)A(t)=P(t)$ and $A(t)A^-(t)=(I-R(t))$,
		for all $ t\in [0,T]$.\\ Multiplying $(\ref{equa8})$ by the matrix $A^-(\cdot)$   we  obtain:  
		\begin{equation}\label{equa10}
			P(t)dX(t)=A^-(t)f(t,X(t))dt+A^-(t)g(t,X(t))dW(t),\text{ for } t\in \left[0,T\right].
		\end{equation}
		Note that: 
		$A^-(t)(I-R(t))=A^-(t)A(t)A^-(t)=A^-(t)$ for all $t\in \left[0,T\right],$ 
		(see (ii) in Definition \ref{def2}).\\
		
		We continue by defining $k(t,X)=P(t)X$, $t\in \left[0,T \right]$ and $X\in \mathbb{R}^n$. Observe that $k(\cdot,X(\cdot))$ is a It\^o process wherever $X(\cdot)$  is a It\^o  process.\\  
		%Let $X(\cdot)$ be solution  of equation  $\eqref{equa1}$.
		Using the fact that  $P(\cdot)$ is an continuously differentiable projector (see (A2.2) in the Assumption 2 ) and using also the It\^o formula (Proposition \ref{propito}) we obtain:
		\begin{align*}
			dk(t,X)&=P'(t)Xdt+P(t)dX,~t\in \left[0,T \right].
		\end{align*}
		Consequently,
		\begin{align}\label{equa9}
			P(t)dX&= d(P(t)X)-P'(t)Xdt,~t\in \left[0,T \right].
		\end{align}
		Combining the equations $\eqref{equa10}$ and $\eqref{equa9}$ we obtain:
		\begin{equation}\label{aa}
			d(P(t)X(t))=(P'(t)X(t)+A^-(t)f(t,X(t)))dt+A^-(t)g(t,X(t))dW(t),~t\in \left[0,T \right].
		\end{equation}
		We recall  that we denoted by  $P(t)X(t)= u(t)$ and $X(t)=u(t)+\hat{v}(t,u(t)), ~t\in  \left[ 0,T\right]$. Therefore, we can rewrite the equation \eqref{aa} as follows
		\begin{align}\label{equa10a}
			u(t)-u_0&=\int_{0}^{t}P'(s)\left( u(s)+\hat{v}(s,u(s))\right) +A^-(s)f(s,u(s)+\hat{v}(s,u(s)))ds\nonumber\\
			&+\int_{0}^{t}A^-(s)g(s,u(s)+\hat{v}(s,u(s)))dW(s), 
			~u_0=P(0)X(0),~t\in  \left[ 0,T\right].
		\end{align}
		Observe that $u(\cdot)$ is the new unknown variable in \eqref{equa10a}.
		
		The equation \eqref{equa10a} is called the inherent regular SDE (under P) associated to the SDAE \eqref{equa1}. Consequently solving \eqref{equa1} is equivalent with solving \eqref{equa10a} and \eqref{equa6}.	\\
		
		Let $\hat{f}(t,u)=P'(t)(u+\hat{v}(t,u))+A^-(t)f(t, u+\hat{v}(t,u))$ and  $\hat{g}(t,u)=A^-(t)g(t, u+\hat{v}(t,u))$,  $u\in \mathbb{R}^n$, $ t\in \left[ 0,T\right] $. Observe that $\hat{f}: \left[ 0,T\right]\times\mathbb{R}^n \to \mathbb{R}^n $, and $\hat{g}: \left[ 0,T\right]\times\mathbb{R}^n\to \mathbb{R}^{n\times m}$ are measurable functions.
		
		Furthermore, we will demonstrate that the functions $\hat{f}(\cdot, \cdot)$ and $\hat{g}(\cdot, \cdot)$ are locally Lipschitz continuous with respect to the variable  $u$ (as indicated in \eqref{equa4}) and satisfy the monotone condition \eqref{eqat}. Consequently, we can conclude that there exists a unique solution process $u(\cdot)$ for the equation \eqref{equa10a}, where $u\in \mathcal{M}^2(\left[0, T\right],\mathbb{R}^n)$, as stated in Theorem \ref{theoexi}.
		We would like to mention that the solution of $\eqref{equa10a}$ does not depend on the choice of the projector $P(\cdot)$.\\
		
		Let us prove that the functions  $\hat{f}(\cdot, \cdot)$ and $\hat{g}(\cdot, \cdot)$ are locally Lipschitz continuous $(\ref{equa4})$  and satisfy the monotone condition  $(\ref{eqat})$.\\
		\newpage
		\underline{\textbf{Part 2: Locally Lipschitz and the monotone conditions on $\hat{f}(\cdot, \cdot)$ and $\hat{g}(\cdot, \cdot)$}}
		\begin{itemize}
			\item Recall that the implicit function $\hat{v}:\left[0, T\right]\times\mathbb{R}^n\to \mathbb{R}^n$ solves the equation:
			$$h(t,u,v)=A(t)v+R(t)f(t,u+v)=0, ~u,~v\in \mathbb{R}^n, ~t\in\left[0, T\right].$$ Moreover, we observe that $\hat{v}(\cdot,\cdot )$     is continuously differentiable with respect to $t$ and to $u$  and globally Lipschitz with respect to $u$ because:\\
			
			The function $h:\left[0, T\right]\times\mathbb{R}^n\times\mathbb{R}^n\to \mathbb{R}^n$ is continuously differentiable  with respect to $v$ and $u$  and the Jacobian matrix $J(\cdot,\cdot)$  possesses a globally bounded inverse.
			
			By Theorem 7 in \cite{idczak2014global}, we have:
			\begin{align*}
				0&=h_u'(t,u,\hat{v}(t,u))=\hat{v}'_u(t,u)h_v'(t,u,\hat{v}(t,u))+h_u'(t,u,\hat{v}(t,u)), ~u\in \mathbb{R}^n, \\
				~t\in\left[0, T\right] &.\\ \text{ Consequently },&\\
				\hat{v}'_u(t,u)&=-\left( h_v'(t,u,\hat{v}(t,u))\right) ^{-1}h_u'(t,u,\hat{v}(t,u)),~u\in \mathbb{R}^n, ~t\in\left[0, T\right].
			\end{align*}
			Finally,
			\begin{align*}
				\hat{v}'_u(t,u)&=-(A(t)+R(t)f'_X(t,u+\hat{v}(t,u)))^{-1}R(t)f'_X(t,u+\hat{v}(t,u)),\\
				&=-J^{-1}(t,u+\hat{v}(t,u))R(t)f'_X(t,u+\hat{v}(t,u)),  ~u\in \mathbb{R}^n, ~t\in\left[0, T\right].
			\end{align*}
			
			Consequently,	let $I_n$ been  the identity matrix in $\mathbb{R}^n$.	Then we have: 
			\begin{align*}
				I_n=J^{-1}(t,u+\hat{v}(t,u))J(t,u+\hat{v}(t,u)))&=J^{-1}(t,u+\hat{v}(t,u))A(t)\\
				&+J^{-1}(t,u+\hat{v}(t,u))R(t)f'_X(t,u+\hat{v}(t,u)),\\
				u\in \mathbb{R}^n, ~t\in\left[0, T\right].	&
			\end{align*}
			then we have:
			\begin{align*}
				I_n-J^{-1}(t,u+\hat{v}(t,u))A(t)&=J^{-1}(t,u+\hat{v}(t,u))R(t)f'_X(t,u+\hat{v}(t,u))=\hat{v}'_u(t,u),\\
				u\in \mathbb{R}^n, ~t\in\left[0, T\right]& .
			\end{align*}
			Recall that $J^{-1}(\cdot, \cdot) $ is globally bounded.    Therefore, $\hat{v}'_u(\cdot,\cdot)$ is also bounded. 
			Finally  $\hat{v}(\cdot, \cdot)$ is globally Lipschitz continuous with the constant $L_{\hat{v}}=N \left\|A \right\| _{\infty}$+n.
			
			\item Recall that, $\hat{f}(t,u)=P'(t)(u+\hat{v}(t,u))+A^-(t)f(t,u+\hat{v}(t,u)),~t\in \left[0, T\right],~u\in \mathbb{R}^n$. Let $q>0  $. We have that: 
			\begin{align}\label{equa11}
				\left\|\hat{f}(t,u_1)-\hat{f}(t,u_2) \right\|  &= \left\|P'(t)(u_1+\hat{v}(t,u_1))+A^-(t)f(t,u_1+\hat{v}(t,u_1))\right.\nonumber\\
				& \left.-P'(t)(u_2+\hat{v}(t,u_2))	-A^-(t)f(t,u_2+\hat{v}(t,u_2)) \right\| \nonumber \\
				&\leq \left|P'(t) \right| _F (1+L_{\hat{v}})\left\| u_1-u_2\right\|+ L_{q}\left| A^-(t)\right|_F\nonumber\\
				&	\times\left\|(u_1+\hat{v}(t,u_1)-(u_2+\hat{v}(t,u_2)) \right\|   \nonumber \\
				&\leq  \left|P'(t) \right| _F (1+L_{\hat{v}})\left\| u_1-u_2\right\|+L_{q} \left| A^-(t)\right|_F\left[\left\|u_1-u_2\right\|\right.\nonumber\\
				&\left.+\left\| \hat{v}(t,u_1)-\hat{v}(t,u_2) \right\|   \right]    \nonumber\\
				&\leq \left\|P' \right\| _{\infty} (1+L_{\hat{v}})\left\| u_1-u_2\right\|+ L_{q}\left\| A^-\right\|_{\infty}\left[\left\|u_1-u_2\right\|\right.\nonumber\\
				&\left.+L_{\hat{v}}\left\|u_1+u_2\right\|\right] \nonumber\\
				&\leq \left\|P' \right\| _{\infty} (1+L_{\hat{v}})\left\| u_1-u_2\right\|+ L_{q}(1+L_{\hat{v}})\left\| A^-\right\|_{\infty}\left\|u_1-u_2\right\| \nonumber\\
				&\leq (1+L_{\hat{v}})\left(\left\|P' \right\| _{\infty}+L_q\left\| A^-\right\|_{\infty} \right) \left\| u_1-u_2\right\|\nonumber\\
				&\leq L_{\hat{f}_q}\left\|u_1-u_2\right\|,~t\in \left[0,T\right],
			\end{align} 
			for  $u_1,u_2\in \mathbb{R}^n $ with $\left\|u_1 \right\|\lor \left\|u_2 \right\|\leq q$,
			and  $ L_{\hat{f}_q}=(1+L_{\hat{v}})\left(\left\|P' \right\| _{\infty}+L_q\left\| A^-\right\|_{\infty} \right).$ Consequently $\hat{f}(\cdot,\cdot)$ is locally Lipschitz with respect to $u$.
			\item Recall that, $\hat{g}(t,u)=A^-(t)g(t,u+\hat{v}(t,u)),~ t\in \left[0,T\right],~u\in \mathbb{R}^n$.\\ Let $q>0  $. We have that:
			\begin{align}\label{equa11a}
				\left\|\hat{g}(t,u_1)-\hat{g}(t,u_2) \right\|  &= \left\|A^-(t)g(t,u_1+\hat{v}(t,u_1))-A^-(t)g(t,u_2+\hat{v}(t,u_2)) \right\| \nonumber \\
				&\leq L_{q}\left| A^-(t)\right|_F\left\|(u_1+\hat{v}(t,u_1)-(u_2+\hat{v}(t,u_2) \right\|   \nonumber \\
				&\leq L_{q} \left\| A^-\right\|_{\infty}\left[\left\|u_1-u_2\right\|+\left\| \hat{v}(t,u_1)-\hat{v}(t,u_2) \right\|   \right]    \nonumber\\
				&\leq L_{q}\left\| A^-\right\|_{\infty}\left[\left\|u_1-u_2\right\|+L_{\hat{v}}\left\|u_1+u_2\right\|\right] \nonumber\\
				&\leq L_{q}(1+L_{\hat{v}})\left\| A^-\right\|_{\infty}\left\|u_1-u_2\right\| \nonumber\\
				&\leq L_{\hat{g}_q}\left\|u_1-u_2\right\|,~t\in \left[0,T\right],
			\end{align} 
			for $u_1,u_2\in \mathbb{R}^n $ with $\left\|u_1 \right\|\lor \left\|u_2 \right\|\leq q $,
			and  $ L_{\hat{g}_q}=L_{q}(1+L_{\hat{v}})\left\| A^-\right\|_{\infty}.$\\ Consequently $\hat{g}(\cdot, \cdot)$ is locally Lipschitz with respect to $u$.
			\item Moreover, $\hat{f}(\cdot, \cdot)$ and $\hat{g}(\cdot, \cdot)$ satisfy the monotone condition \eqref{eqat} with respect to $u$. We have:
			
			%	\begin{align*}
				%		\left\| \hat{f}(t,u)\right\|^2&=	\left\|P'(t)(u+\hat{v}(t,u))+A^-(t)f(t,u+\hat{v}(t,u))\right\|^2\\
				%		&\leq 2\left\|P'(t)(u+\hat{v}(t,u))\right\|^2+	2\left\|A^-(t)f(t,u+\hat{v}(t,u))\right\|^2\\
				%		&\leq 2\left|P'(t)\right|_F^2\left\|u+\hat{v}(t,u)\right\|^2+2\left|A^-(t)\right|^2_FC\left\|u+\hat{v}(t,u)\right\|^2\\
				%		&\leq 2(\left\|P'\right\|_{\infty}^2+\left\|A^-\right\|_{\infty}^2C)\left\|u+\hat{v}(t,u)\right\|^2 \\
				%		&\leq  4(\left\|P'\right\|^2_{\infty}+\left\|A^-\right\|_{\infty}^2C)(	\left\|u\right\|^2+	2\left\|\hat{v}(t,u)-\hat{v}(t,0))\right\|^2+2\left\|\hat{v}(t,0))\right\|^2)\\
				%		&\leq  4(\left\|P'\right\|_{\infty}^2+\left\|A^-\right\|_{\infty}^2C)(	(1+2L_{\hat{v}})\left\|u\right\|^2+	+2\left\|\hat{v}(t,0))\right\|^2)\\
				%		&\leq  4K_1(\left\|P'\right\|^2_{\infty}+\left\|A^-\right\|^2_{\infty}C)(	1+\left\|u\right\|^2), ~u\in\mathbb{R}^n , ~t\in \left[0,T\right],
				%	\end{align*}
			%	Here, $K_1=\max_{t\in \left[0,T\right]}(1+2L_{\hat{v}},2\left\|\hat{v}(t,0))\right\|^2)$,  consequently $\hat{f}(\cdot, \cdot)$ satisfies the linear growth condition with respect to $u$.
			\begin{align*}
				\left\langle u^T, \hat{f}(t,u)  \right\rangle +\frac{1}{2}\left|\hat{g}(t,u)  \right|^2_F&=\left\langle u^T, P'(t) (u+\hat{v}(t,u))+A^-(t)f(t,u+\hat{v}(t,u))\right\rangle\\
				&+\left| A^-(t)g(t,u+\hat{v}(t,u))\right|^2_F \\
				&\leq \left\langle u^T, P'(t) (u+\hat{v}(t,u))\right\rangle +\left\langle u^T, \right. \\
				&\left. A^-(t)f(t,u+\hat{v}(t,u))\right\rangle +\frac{1}{2}\left| A^-(t)g(t,u+\hat{v}(t,u))\right|^2_F,\\
				&~t\in \left[ 0,T\right], u\in \mathbb{R}^n.
			\end{align*}
			According to the Assumption 1 (\ref{equa3}) with p=2 and using the fact that the function $\hat{v}(\cdot,\cdot)$ is globally Lipschitz we have:
			\begin{align*}
				\left\langle u^T, \hat{f}(t,u)  \right\rangle +\frac{1}{2}\left|\hat{g}(t,u)  \right|^2_F	&\leq \left\| u\right\| ^2 +\left| P'(t)\right|^2_F \left\| u+\hat{v}(t,u)\right\| ^2 \\
				&+k(1+\left\|u+\hat{v}(t,u) \right\|^2  )\\
				&\leq \left\| u\right\| ^2 +\left| P'(t)\right|^2_F \left\| u+\hat{v}(t,u)-\hat{v}(t,0)+\hat{v}(t,0)\right\| ^2 \\
				&+k(1+\left\|u+\hat{v}(t,u)-\hat{v}(t,0)+\hat{v}(t,0) \right\|^2  )\\
				&\leq \left\| u\right\| ^2 +2\left| P'(t)\right|^2_F\left(  \left\|u\right\| ^2+2L_{\hat{v}}\left\| u\right\| ^2+2 \left\| \hat{v}(t,0)\right\| ^2 \right)\\
				&+k(1+2\left\|u\right\| ^2+4L_{\hat{v}}\left\| u\right\| ^2+4\left\| \hat{v}(t,0) \right\|^2  )\\
				&\leq \left\| u\right\| ^2 +2\left| P'(t)\right|^2_F\left(  \left\|u\right\| ^2(1+2L_{\hat{v}}) +2 \left\| \hat{v}(t,0)\right\| ^2 \right)\\
				&+k(1+\left\|u\right\| ^2(2++4L_{\hat{v}})+4\left\| \hat{v}(t,0) \right\|^2  )\\
				&\leq \left\| u\right\| ^2(1+2(1+2L_{\hat{v}})\left| P'(t)\right|^2_F) +4\left| P'(t)\right|^2_F \left\| \hat{v}(t,0)\right\| ^2 \\
				&+k+k\left\|u\right\| ^2(2+4L_{\hat{v}})+4k\left\| \hat{v}(t,0) \right\|^2  )\\
				&\leq \left\| u\right\| ^2(1+2(1+2L_{\hat{v}})(\left\| P'\right\|^2_{\infty}+k) \\
				&+k+\left\| \hat{v}(t,0) \right\|^2(4k+4\left\| P'\right\|^2_{\infty})  )\\
				&\leq k_1(1+\left\| u\right\| ^2),~t\in \left[ 0,T\right], u\in \mathbb{R}^n.
			\end{align*}
			Here $$k_1=max_{t\in \left[ 0,T\right]}\left((1+2(1+2L_{\hat{v}})(\left\| P'\right\|^2_{\infty}+k),k+\left\| \hat{v}(t,0) \right\|^2(4k+4\left\| P'\right\|^2_{\infty}\right).  $$
			Consequently $\hat{f}(\cdot, \cdot)$ and $\hat{g}(\cdot, \cdot)$ satisfy the monotone condition \eqref{eqat} with respect to $u$.
			%	\item Similarly, $\hat{g}(t,u)=A^-(t)g(t,u+\hat{v}(t,u)),~t\in \left[0,T\right], ~u\in\mathbb{R}^n $ verifies also the linear growth condition with respect to $u$. We have:
			%	\begin{align*}
				%		\left\| \hat{g}(t,u)\right\|^2&=	\left\|A^-(t)g(t,u+\hat{v}(t,u))\right\|^2\\
				%		&\leq 	\left|A^-(t)\right|_F^2\left\|g(t,u+\hat{v}(t,u))\right\|^2\\
				%		&\leq \left|A^-(t)\right|^2_FC\left\|u+\hat{v}(t,u)\right\|^2\\
				%		&\leq\left\|A^-\right\|_{\infty}^2C(	\left\|u\right\|^2+	2\left\|\hat{v}(t,u)-\hat{v}(t,0))\right\|^2+2\left\|\hat{v}(t,0))\right\|^2)\\
				%		&\leq  \left\|A^-\right\|_{\infty}^2C(	(1+2L_{\hat{v}})\left\|u\right\|^2	+2\left\|\hat{v}(t,0))\right\|^2)\\
				%		&\leq  CK_1\left\|A^-\right\|^2_{\infty}(	1+\left\|u\right\|^2), , ~u\in\mathbb{R}^n , ~t\in \left[0,T\right],
				%	\end{align*}
			%	Here, $K_1=\max_{t\in \left[0,T\right]}(1+2L_{\hat{v}},2\left\|\hat{v}(t,0))\right\|^2)$,  consequently $\hat{g}(\cdot, \cdot)$ satisfies the linear growth condition with respect to $u$.
		\end{itemize}
		We can now conclude that equation \eqref{equa10a} has a unique solution $u(\cdot)$ in $\mathcal{M}^2(\left[0,T\right],\mathbb{R}^n )$ according to Theorem \ref{theoexi}.\\ Finally, we can prove that the solution $X(\cdot)$ is also in $\mathcal{M}^2 ( \left[0,T \right], \mathbb{R}^n)$.\\
		\underline{\textbf{Part 3: The  solution $X(\cdot)$ of \eqref{equa1} belongs to  $	\mathcal{M}^2 ( \left[0,T \right], \mathbb{R}^n)$ }}\\
		
		Indeed,  we have:
		\begin{align*}
			\left\|X(t) \right\|  &= \left\|u(t)+\hat{v}(t,u(t))\right\| \nonumber \\
			&= \left\| u(t)+\hat{v}(t,0)+\hat{v}(t,u(t))-\hat{v}(t,0) \right\|   \nonumber \\
			&\leq \left\| u(t)+\hat{v}(t,0)  \right\| + \left\| \hat{v}(t,u(t))-\hat{v}(t,0)  \right\|    \nonumber\\
			&\leq \left\| u(t)\right\|+\left\| \hat{v}(t,0)\right\| +L_{\hat{v}}\left\|u(t) \right\|,~t\in \left[0,T\right]   \nonumber\\
			\int_{0}^{T}\left\|X(t) \right\|^2dt&\leq2(1+L_{\hat{v}})^2	\int_{0}^{T}\left\|u(t) \right\|^2dt+2T\left\| \hat{v}(t,0)\right\|^2_{\infty}  \nonumber \\
			\mathbb{E}\int_{0}^{T}\left\| X(t)\right\|^2dt &\leq 2(1+L_{\hat{v}})^2\mathbb{E}	\int_{0}^{T}\left\|u(t) \right\|^2dt+2T\left\| \hat{v}(t,0)\right\|^2_{\infty}  \nonumber.
		\end{align*}
		Using the fact that 	$u\in \mathcal{M}^2 ( \left[0,T \right], \mathbb{R}^n)$  (see Theorem \ref{theoexi}) we have: 
		\begin{align}\label{equa12}
			\mathbb{E}\int_{0}^{T}\left\| X(t)\right\|^2dt	\leq2T\left\| \hat{v}(t,0)\right\|^2_{\infty}+C(T)
			< \infty, \quad \quad \quad \quad  t\in\left[ 0,T\right].
		\end{align}
		%l_1(1+\mathbb{E}\left\|u_0 \right\|^2 )e^{l_2t}
		Here  $C(T)$  is a  constant depending on $T$ and $L_{\hat{v}}$.\\ Consequently, we can conclude that
		the solution $X(\cdot)$ of the equation \eqref{equa1} exists and belongs to $ 	\mathcal{M}^2 ( \left[0,T \right], \mathbb{R}^n)$.\\
		It remains to prove that \eqref{equa14} is satisfied. More precisely, we want to prove that\\
		
		\underline{\textbf{ Part 4: $	\mathbb{E}\left( \sup_{t\in\left[0,T \right] }\left\|X(t) \right\|^2 \right) <\infty$}}\\
		
		We use the fact that $u(\cdot)$ is the solution of SDE $\eqref{equa10a}$ and Theorem \ref{theoexi}. Therefore, we have $\mathbb{E}\left( \sup_{t\in\left[0,T \right] }\left\|u(t) \right\|^2\right)<\infty$ . \\
		Let us show that \eqref{equa14} is satisfied.\\
		Recall that: 
		\begin{align*}
			\left\|X(t) \right\|  &= \left\|u(t)+\hat{v}(t,u(t))\right\| \nonumber \\
			&= \left\| u(t)+\hat{v}(t,0)+\hat{v}(t,u(t)))-\hat{v}(t,0) \right\|  \\
			&\leq \left\| u(t)+\hat{v}(t,0)  \right\| + \left\| u(t)-\hat{v}(t,0)  \right\|  \\
			&\leq \left\| u(t)\right\|+\left\| \hat{v}(t,0)\right\| +L_{\hat{v}}\left\|u(t) \right\|,   t\in \left[0,T \right].
		\end{align*}
		Consequently, we obtain the following estimation:
		\begin{align*}
			\sup_{t\in\left[0,T \right] }\left\| X(t)\right\|^2 &\leq 2(1+L_{\hat{v}})^2\sup_{t\in\left[0,T \right] }\left\|u(t) \right\|^2+2\sup_{t\in\left[0,T \right] }\left\| \hat{v}(0,t)\right\|^2.  
		\end{align*}
		Therefore,
		\begin{align*}
			\mathbb{E}\left( 	\sup_{t\in\left[0,T \right] }\left\| X(t)\right\|^2\right)  &\leq 2(1+L_{\hat{v}})^2 	\mathbb{E}\left( \sup_{t\in\left[0,T \right] }\left\|u(t) \right\|^2\right) +2\sup_{t\in\left[0,T \right] }\left\| \hat{v}(0,t)\right\|^2.
		\end{align*}
		We can now apply Theorem \ref{theoexi1}   with $p=2$  and  $r_2=2$ and  we have:
		\begin{align*}
			\mathbb{E}\left( 	\sup_{t\in\left[0,T \right] }\left\| X(t)\right\|^2\right)	&< \infty, 
		\end{align*}
		which completes the proof of Theorem \ref{theo1}.
	\end{prev}
	
	We proceed with the following remark, which is essential in the proof of Theorem \ref{theo2} in the subsequent section.
	\begin{rmk}\label{rem4} 
		A consequence of the Assumption 1 (A1.1)   \eqref{equa3} is:
		\begin{align}\label{equa13b}
			\left\langle (P(t)X)^T,A^-(t)f(t,X)\right\rangle +\dfrac{p-1}{2}\left|A^-(t)g(t,X) \right|^2_F &\leq K(1+\left\|P(t)X \right\|^2 ),\\
			~t\in \left[ 0,T\right],~X\in \mathbb{R}^n \nonumber.
		\end{align}
	\end{rmk}
	Indeed, from inequality $\eqref{equa3}$ we have :
	\begin{align*}
		\left\langle (P(t)X)^T,A^-(t)f(t,X)\right\rangle +\dfrac{p-1}{2}\left|A^-(t)g(t,X) \right|^2_F &\leq k(1+\left\|X \right\|^2 )= k(1+\left\|u+\hat{v}(u,t) \right\|^2 )\\
		&\leq  k(1+\left\|u+\hat{v}(u,t)-v(0,t)+v(0,t)\right\|^2 )\\
		&\leq k(1+4\left\| \hat{v}(0,t)\right\|^2+2(2+L_{\hat{v}})^2\left\| u\right\|^2)\\
		&\leq K(1+\left\| P(t)X\right\|^2),~t\in \left[ 0,T\right],X\in \mathbb{R}^n,
	\end{align*}
	with $K=k\max\left(1+4\left\| \hat{v}(0,\cdot)\right\|^2_{\infty}, 2(2+L_{\hat{v}})^2\right).$
	
	We have all the necessary results for continuing to study the regularity of the solution $X(\cdot)$ of the equation \eqref{equa1}.

	\subsection{More results on the regularity of the solution for the non-autonomous SDAEs}
	
	In this section, we present regularity results for the solution of \eqref{equa1}.

	\begin{theo}\label{theo2} [Main result 2] \\
		Let $p \geq 2$ and $\mathbb{E}(\left\|X_0\right\|^p)<\infty$. Assume that Assumptions 1 and 2 hold. Then the solution $X(\cdot)$ of the equation $(\ref{equa1})$ satisfies the relation.
		$$\mathbb{E}(\left\|X(t)\right\|^p)\leq C(t),~ t\in \left[0,T\right], $$
		$C(\cdot)$ is the function of $t$.
	\end{theo}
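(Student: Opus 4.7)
My strategy is to invoke the same transformation established in Part 1 of the proof of Theorem \ref{theo1}: decompose $X(t) = u(t) + \hat{v}(t, u(t))$, where $u(\cdot)$ solves the inherent regular SDE \eqref{equa10a} with coefficients $\hat{f}, \hat{g}$, and $\hat{v}(\cdot, \cdot)$ is globally Lipschitz in its second argument with constant $L_{\hat{v}}$. The elementary inequality
\begin{equation*}
\|X(t)\|^p \leq 2^{p-1}\bigl((1+L_{\hat{v}})^p \|u(t)\|^p + \|\hat{v}(t,0)\|^p\bigr),
\end{equation*}
together with the continuity of $t \mapsto \hat{v}(t, 0)$, reduces the problem to showing $\mathbb{E}\|u(t)\|^p \leq \tilde{C}(t)$.

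The next step is to upgrade the monotone inequality obtained in Part 2 of the proof of Theorem \ref{theo1} (proved there only for $p=2$) to general $p \geq 2$. Starting from Assumption 1 (A1.1) with its $(p-1)/2$ coefficient and combining it with Remark \ref{rem4}, the same algebraic chain used in Part 2 --- writing $\hat{v}(t,u) = \hat{v}(t,u) - \hat{v}(t,0) + \hat{v}(t,0)$ and using the Lipschitz bound on $\hat{v}$ --- yields a constant $k_p > 0$ such that
\begin{equation*}
\bigl\langle u, \hat{f}(t,u)\bigr\rangle + \frac{p-1}{2}\bigl|\hat{g}(t,u)\bigr|_F^2 \leq k_p\bigl(1 + \|u\|^2\bigr), \quad u \in \mathbb{R}^n,\ t \in [0,T].
\end{equation*}

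With this in hand, I would apply It\^o's formula to $V(u) = \|u\|^p$, regularizing as $V_\varepsilon(u) = (\varepsilon + \|u\|^2)^{p/2}$ and passing $\varepsilon \downarrow 0$ to handle the singularity at the origin. A direct computation of $\nabla V$ and $D^2 V$, combined with the elementary bound $\|\hat{g}^T u\|^2 \leq \|u\|^2\,|\hat{g}|_F^2$, produces
\begin{equation*}
\|u(t)\|^p = \|u_0\|^p + \int_0^t p\|u(s)\|^{p-2}\Bigl[\bigl\langle u(s), \hat{f}(s,u(s))\bigr\rangle + \tfrac{p-1}{2}\bigl|\hat{g}(s,u(s))\bigr|_F^2\Bigr]ds + M(t),
\end{equation*}
where $M(\cdot)$ is a local martingale. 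The bracket is controlled by $k_p(1+\|u\|^2)$, and Young's inequality $\|u\|^{p-2}(1+\|u\|^2) \leq 1 + 2\|u\|^p$ turns the drift integrand into $C_p(1+\|u\|^p)$.

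Finally, a standard localization via the stopping times $\tau_R = \inf\{t \geq 0 : \|u(t)\| \geq R\}$ removes the local martingale after taking expectation, producing
\begin{equation*}
\mathbb{E}\|u(t \wedge \tau_R)\|^p \leq \mathbb{E}\|u_0\|^p + C_p\int_0^t\bigl(1 + \mathbb{E}\|u(s \wedge \tau_R)\|^p\bigr)\,ds.
\end{equation*}
Gr\"onwall's inequality provides a bound uniform in $R$; Fatou yields $\mathbb{E}\|u(t)\|^p \leq C(t)$, and the $X$-decomposition completes the proof. I expect the main obstacle to be a careful bookkeeping of the $p$-dependence in the monotone estimate for $\hat{f}, \hat{g}$ --- ensuring that $k_p$ depends only polynomially on $L_{\hat{v}}$, $\|A^-\|_\infty$, $\|P'\|_\infty$, and $\|\hat{v}(\cdot,0)\|_\infty$ --- and justifying the It\^o regularization step, where the super-linear growth Assumption 2 (A2.1) on $g$ guarantees the integrability needed to pass to the limit in the local martingale part.
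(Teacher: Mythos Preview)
Your proposal is correct and follows essentially the same route as the paper: reduce to the inherent SDE for $u$, apply It\^o's formula to a $p$-th power functional, exploit the monotone condition (via Remark~\ref{rem4}) to bound the drift by $C(1+\|u\|^2)$, apply Gr\"onwall, and transfer the bound to $X$ through the Lipschitz map $u\mapsto u+\hat v(t,u)$. The only cosmetic difference is that the paper works directly with $Z(u)=(1+\|u\|^2)^{p/2}$---which is already $C^2$, so your regularization step is unnecessary---and simply asserts $\mathbb{E}(D(t))=0$ rather than localizing; your stopping-time argument is in fact the more careful justification of that step.
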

	
	\begin{prev} In this proof we use the inequality: $(a+b)^n\leq 2^{n-1}(a^n+b^n)$. Here $a$ and $b$ are positive numbers. We use a similar approach as in \cite[ Theorem 2.4.1]{Mao2007}.
		Let $u(\cdot)$ be the solution of equation $\eqref{equa10a}$.\\ We define the function $Z(\cdot)$  by: $$Z(u)=(1+\left\|u \right\|^2 )^{\frac{p}{2}}, ~u\in \mathbb{R}^n.$$
		Observe that $Z(\cdot)$ is an continuously differentiable function. Therefore we apply  It\^o's formula ( Proposition \ref{propito}) and have the following formulation:
		\begin{align}\label{equa16}
			Z(u(t))  &=Z(u_0,0)+\int_{0}^{t}\frac{\partial Z}{\partial u}(u(s))\hat{f}(s,u(s))ds+\int_{0}^{t}\frac{\partial Z}{\partial u}(u(s))\hat{g}(s,u(s))dW_s\nonumber\\
			&+\frac{1}{2}\int_{0}^{t}\frac{\partial^2 Z}{\partial u^2}(u(s))\left| \hat{g}(s, u(s))\right|_F^2ds,~ u\in \mathbb{R}^n, ~t\in \left[0,T \right].
		\end{align}	
		We continue by observing that:
		$$\frac{\partial Z}{\partial u}(u)=pu^T \left( 1+\left\|u\right\|^2 \right)^{\frac{p-2}{p}},$$
		$$ \text{ and   } ~~\frac{\partial^2 Z}{\partial u^2}(u)= p(1+\left\|u\right\|^2 )^{\frac{p-2}{2}}+p(p-2)(u^T)^2\left( 1+\left\|u\right\|^2 \right)^{\frac{p-4}{2}}, ~ u\in \mathbb{R}^n.$$

		%\left( 1+	\left\||u(t)+v(u(t),t) \right\|^2\right)^{\frac{p}{2}}
		We define: $$D(t)=p\int_{0}^{t} \left( 1+\left\|u(s)\right\|^2 \right)^{\frac{p-2}{2}}u(s)^T\hat{g}(u(s))dW_s,~t\in \left[0,T\right].$$ 
		Using the previous expressions and the inequality: $$\left\| u\right\|^2\leq 1+\left\| u\right\| ^2,~ u\in \mathbb{R}^n,$$ the equation  (\ref{equa16}) becomes:
		
		\begin{align*}
			\left( 1+	\left\|u(t) \right\|^2\right)^{\frac{p}{2}}  &=\left( 1+	\left\|u_0 \right\|^2\right)^{\frac{p}{2}}+\int_{0}^{t}pu(s)^T \left( 1+\left\|u(s)\right\|^2 \right)^{\frac{p-2}{2}}\hat{f}(s,u(s))ds\nonumber\\
			&+\frac{1}{2}\int_{0}^{t} p(1+\left\|u(s) \right\|^2 )^{\frac{p-2}{2}}\left| \hat{g}(s,u(s))\right|_F^2ds\nonumber\\
			&+\frac{1}{2}\int_{0}^{t}p(p-2)\left\| u(s)\right\| ^2\left( 1+\left\|u(s)\right\|^2 \right)^{\frac{p-4}{2}}\left| \hat{g}(s,u_s)\right|_F^2ds+D(t) \\
			&\leq 	2^{\frac{p-2}{2}}(1+	\left\|u_0 \right\|^{p})+\int_{0}^{t}pu(s)^T \left( 1+\left\|u(s)\right\|^2 \right)^{\frac{p-2}{p}}\hat{f}(s, u(s))ds\nonumber\\
			&+\frac{1}{2}\int_{t_0}^{t}p(1+\left\|u(s) \right\|^2 )^{\frac{p-2}{2}}\left| \hat{g}(s,u(s))\right|_F^2ds\nonumber\\
			&+\frac{1}{2}\int_{0}^{t}p(p-2)\left( 1+\left\|u(s)\right\|^2 \right)^{\frac{p-2}{2}}\left| \hat{g}(s,u_s)\right|_F^2ds+D(t)
			,~t\in \left[0,T\right].
		\end{align*}
		\begin{align}
			\left( 1+	\left\|u(t) \right\|^2\right)^{\frac{p}{2}} 	&\leq 	2^{\frac{p-2}{2}}(1+	\left\|u_0 \right\|^{p})+p\int_{0}^{t}u(s)^T \left( 1+\left\|u(s)\right\|^2 \right)^{\frac{p-2}{2}}\hat{f}(s,u(s))ds\nonumber\\
			&	+\frac{1}{2}p(p-1)\int_{0}^{t} (1+\left\|u(s) \right\|^2 )^{\frac{p-2}{2}}\left| \hat{g}(s,u(s))\right|_F^2ds+D(t)  \nonumber\\
			&\leq 	2^{\frac{p-2}{2}}(1+	\left\|u_0 \right\|^{p})+D(t)\nonumber\\
			&+p\int_{0}^{t}\left[\left\langle  u(s)^T,\hat{f}(s,u(s))\right\rangle+\frac{p-1}{2}\left| \hat{g}(s,u(s))\right|_F^2\right]  \left( 1+\left\|u(s)\right\|^2 \right)^{\frac{p-2}{2}} ds\nonumber\\
			&\leq 2^{\frac{p-2}{2}}(1+\left\|u_0 \right\|^{p})+D(t)+\nonumber\\
			&p\int_{0}^{t}\left[\left\langle (P(s)X(s))^T,A^-(s)f(s,X)\right\rangle +\frac{p-1}{2}\left| A^-(s)g(s,X)\right|_F^2\right]\left( 1+\left\|u(s)\right\|^2 \right)^{\frac{p-2}{2}}\nonumber\\
			&+\langle u(s)^T,P'(s)(u(s)+\hat{v}(s,u(s))) \rangle\left( 1+\left\|u(s)\right\|^2 \right)^{\frac{p-2}{2}}ds,~t\in \left[0,T\right].
		\end{align}
		
		We continue by using Remark 2 and the fact that the function $\hat{v}(\cdot, \cdot)$ is Lipschitz in order to  obtain:
		
		\begin{align}
			\left( 1+	\left\|u(t) \right\|^2\right)^{\frac{p}{2}}  
			&\leq 	2^{\frac{p-2}{2}}(1+	\left\|u_0 \right\|^{p})+pK\int_{0}^{t}(1+\left\| u(s)\right\|^2 )\left( 1+\left\|u(s)\right\|^2 \right)^{\frac{p-2}{2}}ds\nonumber\\
			&+ \int_{t_0}^{t} \left\| u(s)\right\|\left| P'(s)\right| _F\left[ \left\| u(s)\right\| +	L_{\hat{v}}\left\| u(s)\right\|+\left\|\hat{v}(s,0) \right\|\right]\nonumber\\
			&\times \left( 1+\left\|u(s)\right\|^2 \right)^{\frac{p-2}{2}}ds+ D(t)\nonumber\\
			&\leq 	2^{\frac{p-2}{2}}(1+	\left\|u_0 \right\|^{p})+pK\int_{0}^{t}(1+\left\| u(s)\right\|^2 )\left( 1+\left\|u(s)\right\|^2 \right)^{\frac{p-2}{2}}ds\nonumber\\
			&+ \int_{0}^{t}\left[  (1+L_{\hat{v}})\left\| u(s)\right\|^2\left| P'(s)\right| _F\right.\nonumber\\
			&\left.+\left\| u(s)\right\|\left| P'(s)\right| _F\left\|\hat{v}(s,0) \right\|\right] \left( 1+\left\|u(s)\right\|^2 \right)^{\frac{p-2}{2}}ds+D(t)\nonumber\\
			&\leq 	2^{\frac{p-2}{2}}(1+	\left\|u_0 \right\|^{p})+pK\int_{0}^{t}\left( 1+\left\|u(s)\right\|^2 \right)^{\frac{p}{2}}ds+ D(t)\nonumber\\
			&+ \left( 1+L_{\hat{v}}+\left\|\hat{v}(t,0) \right\|_\infty\left\| P'\right\| _{\infty}\right)  \int_{0}^{t}\left( 1+\left\| u(s)\right\|^2\right) \left( 1+\left\|u(s)\right\|^2 \right)^{\frac{p-2}{2}}ds, ~~t\in \left[ 0,T\right].\nonumber
		\end{align}
		Consequently,	
		\begin{align}
			\left( 1+	\left\|u(t) \right\|^2\right)^{\frac{p}{2}}        &\leq 	2^{\frac{p-2}{2}}(1+	\left\|u_0 \right\|^{p})+\left( pK+1+L_{\hat{v}}+\left\|\hat{v}(t,0) \right\|_\infty\left\| P'\right\| _{\infty}\right)\nonumber\\
			&\times \int_{0}^{t}\left( 1+\left\|u(s)\right\|^2 \right)^{\frac{p}{2}}ds+ D(t),~~t\in \left[ 0,T\right].\nonumber
		\end{align}
		Taking the expectation we obtain:
		\begin{align}\label{equa17}
			\mathbb{E}\left[\left( 1+	\left\|u(t) \right\|^2\right)^{\frac{p}{2}} \right]               	&\leq 	2^{\frac{p-2}{2}}(1+	\mathbb{E}\left[\left\|u_0 \right\|^{p}\right] )+\left( pK+1+L_{\hat{v}}+\left\|\hat{v}(t,0) \right\|_\infty\left\| P'\right\| _{\infty}\right)\nonumber\\
			&\times\mathbb{E}\left[\int_{0}^{t}\left( 1+\left\|u(s)\right\|^2 \right)^{\frac{p}{2}} ds\right] + \mathbb{E}(D(t)),t\in \left[ 0,T\right].
		\end{align}
		Recall that:
		$$\mathbb{E}(D(t))=p\mathbb{E}\left[\int_{0}^{t} \left( 1+\left\|u(s)\right\|^2 \right)^{\frac{p-2}{2}}u(s)^T\hat{g}(u(s))dW_s\right] =0, ~t\in \left[0, T\right].$$
		Let $w=pK+1+L_{\hat{v}}+\left\|\hat{v}(t,0) \right\|_\infty\left\| P'\right\| _{\infty}$ and we have:
		% We continue by defining  the stopping time:
		%	$$\tau_n=T\wedge inf\left\lbrace t\in \left[ 0,T\right] ,\left\| u(t)\right\|\geq n\right\rbrace  ,\text{ 	for every integer  } n\geq1.$$   Observe that $\{\tau_n\}_{n\geq 1}$ converge to T. \\
		%	Let $n\geq 1$ using Gronwall inequality we obtain:
		\begin{align*}
			\mathbb{E}\left[\left( 1+	\left\|u(t) \right\|^2\right)^{\frac{p}{2}} \right]               	&\leq 	2^{\frac{p-2}{2}}(1+	\mathbb{E}\left[\left\|u_0 \right\|^{p}\right] )+w\mathbb{E}\left[\int_{0}^{t}\left( 1+\left\|u(s)\right\|^2 \right)^{\frac{p}{2}} ds\right] \nonumber\\
			&\leq 	2^{\frac{p-2}{2}}(1+	\mathbb{E}\left[\left\|u_0 \right\|^{p}\right] )+w_q\int_{0}^{t}\mathbb{E}\left[\left( 1+\left\|u(s)\right\|^2 \right)^{\frac{p}{2}} \right]ds\nonumber\\
			&\leq 	2^{\frac{p-2}{2}}(1+	\mathbb{E}\left[\left\|u_0 \right\|^{p}\right] )\exp(wt),t\in \left[0,T\right].
		\end{align*}
		Then we can  write that: 
		\begin{align}\label{equa18}
			\mathbb{E}\left[\left( 1+	\left\|u(t) \right\|^2\right)^{\frac{p}{2}} \right]               	&\leq 	2^{\frac{p-2}{2}}(1+	\mathbb{E}\left[\left\|u_0 \right\|^{p}\right] )\exp(wt)\nonumber\\
			\mathbb{E}\left[\left\|u(t) \right\|^p \right]               	&\leq 	2^{\frac{p-2}{2}}(1+	\mathbb{E}\left[\left\|u_0 \right\|^{p}\right] )\exp(wt),~t\in \left[0,T\right].
		\end{align}
		Finally, we have:
		\begin{align}\label{equa19}
			\mathbb{E}\left[\left\|X(t) \right\|^p \right]&=\mathbb{E}\left[\left\|u(t)+\hat{v}(u(t)) \right\|^p \right]\nonumber\\
			&=\mathbb{E}\left[\left\|u(t)+ \hat{v}(u(t))-\hat{v}(t,0)+\hat{v}(t,0) \right\|^p \right]\nonumber\\
			&\leq2^{p-1}\mathbb{E}\left[(1+L_{\hat{v}})^p\left\|u(t)\right\| ^p+ \left\| \hat{v}(t,0)\right\|^p \right]\nonumber\\
			&\leq2^{p-1}\left[ (1+L_{\hat{v}})^p\mathbb{E}\left[\left\|u(t)\right\| ^p\right]+ \left\| \hat{v}(t,0)\right\|^p \right] , ~t\in \left[0,T\right].\nonumber\\
			\text{ Using the  }& \text{ estimation \eqref{equa18} we obtain}:\nonumber\\
			\mathbb{E}\left[\left\|X(t) \right\|^p \right]&\leq2^{p-1}\left[ (1+L_{\hat{v}})^p\left[	2^{\frac{p-2}{2}}(1+	\mathbb{E}\left[\left\|u_0 \right\|^{p}\right] )\exp(wt)\right]+ \left\| \hat{v}(t,0)\right\|^p\right] ,~t\in \left[0,T\right].
		\end{align}
		Consequently  $	\mathbb{E}\left[\left\|X(t) \right\|^p\right]\leq C(t)$.\\
		Here, $C(t)=2^{p-1}\left[ (1+L_{\hat{v}})^p\left[	2^{\frac{p-2}{2}}(1+	\mathbb{E}\left[\left\|u_0 \right\|^{p}\right] )\exp(wt)\right]+ \left\| \hat{v}(t,0)\right\|^p\right] ,~t\in \left[0,T\right]$,\\
		and the proof is completed.
	\end{prev}\\
	
	We continue by providing a stronger estimation concerning the supremum overtime on the solution $X(\cdot)$ of the equation \eqref{equa1}.
	
	\begin{theo}\label{theo3}[Main result 3]\\
		Let $d\geq 2$   and $\mathbb{E}(\left\|X_0\right\|^d)<\infty$. Assume that Assumption 2
		hold. Then the solution of the equation \eqref{equa1} satisfies the following estimation:
		\begin{equation}\label{equa27}
			\mathbb{E}\left(\sup_{t\in\left[0,T \right] }\left\|X(t) \right\|^{d-r_2+2}  \right)\leq M  .
		\end{equation}
		Where $M$ is a constant depending on $X_0$,T,$r_2$, i.e., $M=M_{T,r_2,X_0}$ and $r_2$ is given by Assumption 2.
	\end{theo}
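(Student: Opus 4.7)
My plan is to apply It\^o's formula to the auxiliary function $V(u)=(1+\|u\|^2)^{(d-r_2+2)/2}$ evaluated along the solution $u(\cdot)$ of the inherent regular SDE \eqref{equa10a}, then use the Burkholder--Davis--Gundy inequality together with Gronwall to bootstrap the pointwise moment estimate of Theorem \ref{theo2} into a supremum-in-time estimate on $u$, and finally transfer back to $X$ via the identity $X(t)=u(t)+\hat{v}(t,u(t))$ and the global Lipschitz property of $\hat{v}(\cdot,\cdot)$ proven in Part 2 of the proof of Theorem \ref{theo1}. Setting $q:=d-r_2+2\ge 2$ (which lies in $(0,d]$ since $r_2<d$ by the hypothesis of Theorem \ref{theoexi1}), the exponent $q$ is chosen precisely so that the superlinear growth $|g(t,X)|_F^2\le c(1+\|X\|^{r_2})$ from (A2.1) combines cleanly with the weight $(1+\|u\|^2)^{(q-2)/2}$ appearing in the It\^o correction.

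The first concrete step is to expand
\begin{align*}
V(u(t)) &= V(u_0) + \int_0^t \tfrac{\partial V}{\partial u}(u(s))\,\hat{f}(s,u(s))\,ds \\
&\quad + \tfrac{1}{2}\int_0^t \mathrm{tr}\!\left[\tfrac{\partial^2 V}{\partial u^2}(u(s))\,\hat{g}(s,u(s))\hat{g}(s,u(s))^T\right]\,ds + M(t),
\end{align*}
where $M(t)$ is the local martingale $\int_0^t \tfrac{\partial V}{\partial u}(u(s))\hat{g}(s,u(s))\,dW(s)$. The first two Lebesgue integrals are estimated exactly as in the proof of Theorem \ref{theo2}, using Remark \ref{rem4} to absorb $\langle (P(s)X)^T, A^-(s)f(s,X)\rangle + \tfrac{q-1}{2}|A^-(s)g(s,X)|_F^2$ by $K(1+\|P(s)X\|^2)=K(1+\|u(s)\|^2)$, together with the Lipschitz control of $\hat v$ and the bound on $P'$. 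This yields a deterministic bound of the form $C_1(1+\|u_0\|^q) + C_2\int_0^t (1+\|u(s)\|^2)^{q/2}\,ds + M(t)$.

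Next, I would take the supremum over $[0,t]$ and apply Burkholder--Davis--Gundy:
\begin{equation*}
\mathbb{E}\!\left[\sup_{s\in[0,t]}|M(s)|\right] \le C_{BDG}\,\mathbb{E}\!\left[\left(\int_0^t \|u(s)\|^2(1+\|u(s)\|^2)^{q-2}|\hat{g}(s,u(s))|_F^2\,ds\right)^{1/2}\right].
\end{equation*}
Using (A2.1) together with $|\hat g|_F\le \|A^-\|_\infty |g|_F$ and the Lipschitz bound on $\hat v$, one gets $|\hat g(s,u)|_F^2\le c'(1+\|u\|^{r_2})$, so the integrand is dominated by $c''(1+\|u(s)\|^2)^{q-1}\cdot(1+\|u(s)\|^{r_2})\le c'''(1+\|u(s)\|^2)^{q-1+r_2/2}$. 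A Young-type inequality $ab\le \tfrac{1}{2}a^2 + \tfrac{1}{2}b^2$ applied after pulling out $\sup_{s\le t}(1+\|u(s)\|^2)^{q/2}$ lets me hide half of the martingale contribution on the left-hand side, leaving an estimate
\begin{equation*}
\mathbb{E}\!\left[\sup_{s\in[0,t]}(1+\|u(s)\|^2)^{q/2}\right] \le C_3(1+\mathbb{E}\|u_0\|^q) + C_4\int_0^t \mathbb{E}\!\left[(1+\|u(s)\|^2)^{q/2}\right]ds,
\end{equation*}
where the pointwise integrand is already controlled by Theorem \ref{theo2} applied with $p=q$. Gronwall's lemma then produces a finite bound on $\mathbb{E}[\sup_{s\in[0,T]}\|u(s)\|^q]$.

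The transfer from $u$ to $X$ is straightforward: since $\hat v(\cdot,\cdot)$ is globally Lipschitz with constant $L_{\hat v}$, we have $\|X(t)\|\le (1+L_{\hat v})\|u(t)\|+\|\hat v(t,0)\|$, so
\begin{equation*}
\mathbb{E}\!\left[\sup_{t\in[0,T]}\|X(t)\|^{q}\right] \le 2^{q-1}(1+L_{\hat v})^q\,\mathbb{E}\!\left[\sup_{t\in[0,T]}\|u(t)\|^{q}\right] + 2^{q-1}\|\hat v(\cdot,0)\|_\infty^q,
\end{equation*}
which yields the desired $M=M_{T,r_2,X_0}<\infty$. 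The main technical obstacle will be the BDG step: the exponent $q=d-r_2+2$ is the unique choice for which the product $\|u\|^{q-2+r_2}$ appearing after applying (A2.1) inside the square root reduces (after the $\sqrt{\cdot}$) to $\|u\|^{q/2}\cdot\|u\|^{q/2}$, allowing the Young-inequality absorption; choosing any other exponent breaks the balance and leaves a super-linear residual that Gronwall cannot close.
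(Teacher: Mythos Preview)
Your proposal is correct and follows essentially the same route as the paper: It\^o's formula applied to a $q$-th power of $u$ along the inherent SDE \eqref{equa10a}, the monotone condition (via Remark~\ref{rem4}) for the Lebesgue integrals, Burkholder--Davis--Gundy for the martingale part, a Young-type splitting to absorb half of the supremum, Assumption~(A2.1) for the growth of $\hat g$, and finally the transfer $X(t)=u(t)+\hat v(t,u(t))$ using the global Lipschitz constant $L_{\hat v}$. The only cosmetic differences are that the paper works with $Z(u)=\|u\|^{p}$ rather than $(1+\|u\|^2)^{q/2}$, and invokes Lemma~\ref{lem2} (the Young inequality $h^{p-2}v\le\tfrac{p-2}{p}h^p+\tfrac{2}{p}v^{p/2}$) explicitly to separate the factors $\|u\|^{p-2}(1+\|X\|^{r_2})$ before applying the pointwise bound of Theorem~\ref{theo2}; your mention of Gronwall is superfluous since the integrand is already controlled pointwise by Theorem~\ref{theo2}, which is exactly how the paper closes the estimate.
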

	For the proof of the previous theorem, we need several well-known lemmas:
	\begin{lem}\label{lem1} [Burkholder-Davis-Gundy inequality]\\
		Let $p>0$ and $g\in L^2(\mathbb{R}_+, \mathbb{R}^{n\times m}) $. Let
		$x(t)=\int_{0}^{t}g(s)dw(s)$ and  $B(t)=\int_{0}^{t}\left|g(s) \right|^2ds $, $t\in \left[0,T\right].$  Then there exist $c_p>0$ and $k_p>0$ such that:
		\begin{equation}
			c_p\mathbb{E}\left(B(t) \right)^{\frac{p}{2}}\leq \mathbb{E}\left(\sup_{s\in\left[0,T \right] } \left\| x(s)\right\|^p \right) \leq k_p\mathbb{E}\left(\left\|B(t) \right\|^{\frac{p}{2}}  \right),~ t\in \left[0,T \right]. 
		\end{equation}
		In particular, one may take:
		$$c_p=\left(\frac{p}{2} \right)^p,\quad k_p=\left(\frac{32}{p} \right)^{\frac{p}{2}} \quad if \quad 0<p<2, $$
		$$c_p=1,\quad k_p=4 \quad if \quad p=2, $$
		$$ \text{ and }~~c_p=\left(2p\right)^{\frac{-p}{2}},\quad k_p=\left(\frac{p^{p+1}}{2(p-1)^{p-1}} \right)^{\frac{p}{2}} \quad if \quad p>2. $$
		
	\end{lem}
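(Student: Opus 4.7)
\medskip
\noindent\textbf{Proof plan for Lemma~\ref{lem1}.}
The plan is to treat the three ranges of $p$ separately, exploiting that $x(\cdot)=\int_0^{\cdot}g(s)\,dW(s)$ is a continuous (local) martingale with tensor quadratic variation having trace $B(\cdot)$. A standard localisation by stopping times $\tau_N=\inf\{t:\|x(t)\|\vee B(t)\geq N\}$ reduces to the case of bounded integrands, and monotone convergence as $N\to\infty$ restores the general statement; I would carry this reduction out once at the start and then work as if all integrals under consideration are finite.

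The anchor is $p=2$. It\^o's isometry gives $\mathbb{E}\|x(t)\|^{2}=\mathbb{E}B(t)$, and since $\sup_{s\leq t}\|x(s)\|^{2}\geq\|x(t)\|^{2}$ one immediately obtains the lower bound with $c_{2}=1$. Doob's $L^{2}$ maximal inequality applied to the martingale $x(\cdot)$ yields the upper bound with $k_{2}=4$. For $p>2$, I would apply It\^o's formula to the regularised function $\phi_{\varepsilon}(y)=(\|y\|^{2}+\varepsilon)^{p/2}$, take expectations (the stochastic integral is a true martingale after localisation), let $\varepsilon\downarrow 0$, and obtain an estimate of the form
\begin{equation*}
\mathbb{E}\|x(t)\|^{p}\;\leq\;\tfrac{p(p-1)}{2}\,\mathbb{E}\!\int_{0}^{t}\|x(s)\|^{p-2}\,|g(s)|_{F}^{2}\,ds.
\end{equation*}
Combining Doob's $L^{p}$ inequality, $\mathbb{E}\sup_{s\leq t}\|x(s)\|^{p}\leq (p/(p-1))^{p}\mathbb{E}\|x(t)\|^{p}$, with H\"older (exponents $p/(p-2)$ and $p/2$) converts the right side into a product of $(\mathbb{E}\sup\|x\|^{p})^{(p-2)/p}$ and $(\mathbb{E}B(t)^{p/2})^{2/p}$, at which point Young's inequality lets me absorb the supremum on the left. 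The sharp constant $k_{p}=(p^{p+1}/(2(p-1)^{p-1}))^{p/2}$ comes from optimising the Doob/Young constants; the lower bound with $c_{p}=(2p)^{-p/2}$ follows by a reverse application of It\^o's formula to $\phi_{0}(y)=\|y\|^{p}$ together with H\"older in the opposite direction.

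For $0<p<2$, the function $y\mapsto\|y\|^{p}$ fails to be $C^{2}$ at the origin, so It\^o's formula is not directly available. Instead I would deploy Burkholder's good-$\lambda$ inequality: for $\beta>1$, $\delta>0$ and $\lambda>0$,
\begin{equation*}
\mathbb{P}\!\left(\sup_{s\leq t}\|x(s)\|>\beta\lambda,\;B(t)^{1/2}\leq\delta\lambda\right)\;\leq\;\frac{\delta^{2}}{(\beta-1)^{2}}\,\mathbb{P}\!\left(\sup_{s\leq t}\|x(s)\|>\lambda\right),
\end{equation*}
which I would prove by stopping $x$ at the first exit from the ball of radius $\lambda$ and invoking the $L^{2}$ case already established together with Chebyshev. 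Multiplying by $p\lambda^{p-1}$ and integrating over $\lambda\in(0,\infty)$ yields a layer-cake inequality between $\mathbb{E}\sup_{s\leq t}\|x(s)\|^{p}$ and $\mathbb{E}B(t)^{p/2}$; optimising $\beta,\delta$ produces both the upper constant $k_{p}=(32/p)^{p/2}$ and the lower constant $c_{p}=(p/2)^{p}$.

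The main obstacle will be the case $0<p<2$: the non-smoothness of $\|y\|^{p}$ forces the switch to a distributional (good-$\lambda$) argument, and matching the numerical constants exactly as stated requires a careful optimisation of $(\beta,\delta)$. A secondary but routine nuisance is the multidimensional setting with the Frobenius norm, which I would handle by writing $\|x(t)\|^{2}=\sum_{i}x_{i}(t)^{2}$ and applying It\^o's formula componentwise so that the cross terms assemble into $|g(s)|_{F}^{2}\,ds$.
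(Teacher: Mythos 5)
The paper itself offers no proof of this lemma: it simply cites \cite[Theorem 7.3]{Mao2007}, so there is no ``paper's approach'' to compare against beyond the reference. Your sketch is, for the most part, the standard derivation used in Mao's book: It\^o isometry together with Doob's $L^2$ maximal inequality for $p=2$; It\^o's formula on a smoothed $\|\cdot\|^p$, then Doob, H\"older and Young, for $p>2$; and a distributional (stopping-time / layer-cake) argument for $0<p<2$ built on the already-established $L^2$ case. That is the right skeleton, and the numerical constants you quote are exactly the ones that come out of that line of argument.

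Two places where the plan is thinner than the rest. First, for $p>2$ you describe the lower bound as a ``reverse application of It\^o's formula to $\|y\|^p$ with H\"older in the opposite direction''; the cleaner and standard route is to note that $B(t)=\|x(t)\|^2-2\int_0^t x(s)^{T}g(s)\,dW(s)$, so $B(t)^{p/2}\leq 2^{p/2-1}\bigl(\|x(t)\|^{p}+2^{p/2}\,|M(t)|^{p/2}\bigr)$ with $M$ a martingale, and then control $\mathbb{E}|M(t)|^{p/2}$ by the $L^2$ case plus H\"older; as stated your step would need this spelled out to actually produce $c_p=(2p)^{-p/2}$. Second, in the range $0<p<2$, the good-$\lambda$ inequality you write down bounds $\mathbb{P}(\sup\|x\|>\beta\lambda,\ B^{1/2}\leq\delta\lambda)$ and therefore yields only the \emph{upper} bound $k_p=(32/p)^{p/2}$; to get the lower constant $c_p=(p/2)^p$ you need the symmetric good-$\lambda$ inequality with the roles of $\sup\|x\|$ and $B^{1/2}$ interchanged (or, equivalently, a stopping-time argument on $B$ rather than on $x$), which your plan does not mention. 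Neither of these is a conceptual flaw, but they are the two points at which the plan as written would not yet close; since the paper treats the lemma as a citation rather than something to be proved, you could also just follow the paper and invoke \cite[Theorem 7.3]{Mao2007} directly.
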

	For proof of  Lemma \ref{lem1} see \cite[Theorem 7.3]{Mao2007} 
	\begin{lem}\label{lem2} Let $p \geq 2$, we have the following:
		\begin{equation}\label{eqau28}
			h^{p-2}v\leq\frac{p-2}{p}h^p+\frac{2}{p}v^{\frac{p}{2}}, \quad h,v \geq 0.
		\end{equation}
		
	\end{lem}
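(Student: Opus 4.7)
The plan is to recognize this as a direct instance of the classical Young inequality with a carefully chosen pair of conjugate exponents, so the argument reduces to picking the exponents and checking the arithmetic. First I would recall Young's inequality in the form: for any $a,b\geq 0$ and any $\alpha,\beta>1$ with $\tfrac{1}{\alpha}+\tfrac{1}{\beta}=1$, one has $ab\leq \tfrac{a^\alpha}{\alpha}+\tfrac{b^\beta}{\beta}$. The key observation is that $\tfrac{p-2}{p}+\tfrac{2}{p}=1$, which suggests taking $\alpha=\tfrac{p}{p-2}$ and $\beta=\tfrac{p}{2}$; these are indeed conjugate exponents whenever $p>2$.

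Next I would apply Young's inequality to the pair $a=h^{p-2}$ and $b=v$ with these exponents. Then $a^\alpha=(h^{p-2})^{p/(p-2)}=h^p$, while $b^\beta=v^{p/2}$, and dividing by $\alpha=\tfrac{p}{p-2}$ and $\beta=\tfrac{p}{2}$ produces exactly the two terms $\tfrac{p-2}{p}h^p$ and $\tfrac{2}{p}v^{p/2}$ on the right-hand side of \eqref{eqau28}. This handles the case $p>2$ completely.

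The only separate case is the boundary value $p=2$, where the exponent $\alpha=\tfrac{p}{p-2}$ is undefined. Here, however, the claimed inequality degenerates to $v\leq 0+v$, which holds trivially (with equality), so I would dispatch this case by a one-line direct check rather than trying to push Young's inequality through. I would also briefly note the degenerate boundary cases $h=0$ or $v=0$, in which both sides are $0$ and there is nothing to prove.

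There is no serious obstacle in this lemma: it is purely algebraic and the only subtlety is the $p=2$ boundary, which is trivial. Optionally, for a slightly more self-contained presentation, I could prove the inequality from scratch by applying the weighted arithmetic--geometric inequality $x^\lambda y^{1-\lambda}\leq \lambda x+(1-\lambda)y$ with $x=h^p$, $y=v^{p/2}$ and $\lambda=\tfrac{p-2}{p}$, which yields the same bound without explicitly invoking Young's inequality; either route is essentially one line.
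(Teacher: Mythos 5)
Your proof is correct and follows essentially the same route as the paper: apply Young's inequality $ab\leq \tfrac{1}{r}a^{r}+\tfrac{1}{q}b^{q}$ with $a=h^{p-2}$, $b=v$, $r=\tfrac{p}{p-2}$, $q=\tfrac{p}{2}$. Your explicit handling of the degenerate $p=2$ case is a small extra care the paper omits, but it does not change the argument.
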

	\begin{prev}
		We start by recalling the Young inequality i.e.:
		\begin{equation*}
			ab\leq\frac{1}{r}a^r+\frac{1}{q}b^{q}, \quad a,b \geq 0,\quad and\quad \frac{1}{r}+\frac{1}{q}=1.
		\end{equation*}
		We conclude by taking:   $$a=h^{p-2}, ~b=v, ~r=\frac{p}{p-2}~ \text{ and }  ~q=\frac{p}{2}.$$\\ Observe that we have: 
		$\frac{1}{r}+\frac{1}{q}=\frac{p-2}{p}+\frac{2}{p}=1.$
	\end{prev}\\
	At this point we have the necessary results for continuing with the proof of Theorem \ref{theo3}.\\
	\begin{prev}  We use the inequality $(a+b)^n\leq 2^{n-1}(a^n+b^n)$ and the fact that the function $\hat{v}(\cdot, \cdot)$ given by equation \eqref{equa6} is Lipschitz with respect to $u$. Let $p=d-r_2+2$ and $X(\cdot)$ be the solution of the SDAE $\eqref{equa1}$ then we have: 	
		\begin{align}\label{equa24}
			\mathbb{E}\left[\sup_{s\in\left[0,T \right] }\left\|X(s) \right\|^{p} \right]&=\mathbb{E}\left[\sup_{s\in\left[0,T \right] }\left\|u(s)+\hat{v}(s,u(s)) \right\|^{p} \right]\nonumber\\
			&=\mathbb{E}\left[\sup_{s\in\left[0,T \right] }\left\|u(s)+ \hat{v}(u(s))-\hat{v}(s,0)+\hat{v}(s,0) \right\|^{p} \right]\nonumber\\
			&\leq2^{p-1}\mathbb{E}\left[\sup_{s\in\left[0,T \right] }\left[(1+L_{\hat{v}})^{p}\left\|u(s)\right\| ^{p}+ \left\| \hat{v}(s,0)\right\|^{p}\right]\right]\nonumber\\
			&\leq2^{p-1}\left[ (1+L_{\hat{v}})^{p}\mathbb{E}\left[\sup_{s\in\left[0,T \right] }\left\|u(s)\right\| ^{p}\right]+ \sup_{s\in\left[0,T \right] }\left\| \hat{v}(s,0)\right\|^{p} \right].
		\end{align}
		Let us show that $\mathbb{E}\left[\sup_{s\in\left[0,T \right] }\left\|u(s)\right\| ^{p}\right]<\infty$. For doing this we follow similar strategies as in \cite{mao2015truncated} Lemma 4.2\\
		We define  $Z(u)=\left\|u \right\|^{p}, ~u\in\mathbb{R}^n $. Observe that $Z(\cdot)$ is continuously differentiable  and we have:\\
		$$ \frac{\partial Z}{\partial u}=pu^T\left\|u \right\|^{p-2} \text{ and }$$
		$$\frac{\partial^2 Z}{\partial u^2}=p\left\|u \right\|^{p-2}+p(p-2) u^2\left\|u \right\|^{p-4}, u\in \mathbb{R}^n.$$ Using It\^o is  formula (Proposition \ref{propito}) and the fact that $u(\cdot)$ is a solution of \eqref{equa10a}  in order to obtain:
		\begin{align*}
			\left\|u(t) \right\|^{p}& =\int_{0}^{t}pu(s)^T\left\|u(s) \right\|^{p-2}\hat{f}(s,u(s))ds+ \int_{0}^{t}p\left\|u(s) \right\|^{p-2}u(s)^T\hat{g}(s,u(s))dW(s)\\
			&+\frac{1}{2}\int_{0}^{t }\left(p(p-2)\left\| u(s)\right\|^2\left\|u(s) \right\|^{p-4}+p\left\|u(s) \right\|^{p-2}\right) \left\| \hat{g}(s,u(s))\right\|^2ds+\left\| u_0\right\|^{p} \\
			&=\int_{0}^{t}p\left\|u(s) \right\|^{p-2}u(s)^T\hat{f}(s,u(s))ds+ \int_{0}^{t}p\left\|u(s) \right\|^{p-2}u(s)^T\hat{g}(s,u(s))dW(s)\\
			&+\frac{1}{2}\int_{0}^{t }\left(p(p-2)\left\|u(s) \right\|^{p-2}+p\left\|u(s) \right\|^{p-2}\right) \left\| \hat{g}(s,u(s))\right\|^2ds+\left\| u_0\right\|^{p} \\
			&=\left\| u_0\right\|^p+p\int_{0}^{t}\left( \langle u(s)^T,\hat{f}(s,u(s))\rangle+\frac{p-1}{2}\left\| \hat{g}(s,u(s))\right\|^2\right) \left\|u(s) \right\|^{p-2}ds\\
			&+ \int_{0}^{t}p\left\|u(s) \right\|^{p-2}u(s)^T\hat{g}(s,u(s))dW(s)\\ 
			&=\left\| u_0\right\|^{p} +p\int_{0}^{t}\left( \langle (P(s)X(s))^T,A^-(s)f(s,X(s))\rangle+\frac{p-1}{2}\left\|A^-(s)g(s,X(s))\right\|^2\right.\nonumber\\
			&\left.+\langle (P(s)X(s))^T,P'(s)X(s)\rangle\right)\left\|u(s) \right\|^{p-2}ds\\
			&+ \int_{0}^{t}p\left\|u(s) \right\|^{p-2}u(s)^T\hat{g}(s,u(s))dW(s),~~t\in \left[0,T\right].
		\end{align*}
		Using the inequality \eqref{equa3} we have:
		\begin{align*}
			\left\|u(t) \right\|^{p}&\leq 	\left\|u_0 \right\|^{p}+ pk\int_{0}^{t}\left\|u(s) \right\|^{p-2}\left( 1+	\left\|X(s) \right\|^2\right)ds\nonumber\\
			&+p\int_{0}^{t}\left| P(s)\right|_F\left| P'(s)\right|_F\left\| X(s)\right\|^2\left\|u(s)\right\|^{p-2}ds\nonumber\\ &+p\int_{0}^{t}\left\|u(s)\right\|^{p-2}u(s)^T\hat{g}(s,u(s))dW_s\\
			&\leq 	\left\|u_0 \right\|^{p}+ p\left( k+ \left\| P\right\|_{\infty}\left\| P'\right\|_{\infty} \right) \int_{0}^{t}\left\|u(s) \right\|^{p-2}\left( 1+	\left\|X(s) \right\|^2\right)ds\\ &+p\int_{0}^{t}\left\|u(s)\right\|^{p-2}u(s)^T\hat{g}(s,u(s))dW_s,~~t\in \left[0,T\right].
		\end{align*}
		We continue by Letting $w^1=p\left( k+\left\| P\right\|_{\infty}\left\| P'\right\|_{\infty}\right) $. Taking the supremum over $t$ in both side we obtain:
		\begin{align*}
			\sup_{t\in\left[0,T \right]}\left\|u(t) \right\|^{p} & \leq\left\|u_0 \right\|^{p}+ w^1\int_{0}^{T}\left\|u(s) \right\|^{p-2}\left( 1+	\left\|X(s) \right\|^2\right)ds\\
			&+p\sup_{t\in\left[0,T \right] }\left| \int_{0}^{t}\left\|u(s)\right\|^{p-2}u(s)^T\hat{g}(s,u(s))dw_s \right| .
		\end{align*}
		Taking the expectation, we obtain:
		\begin{align*}
			\mathbb{E}\left[  \sup_{t\in\left[0,T \right]}\left\|u(t) \right\|^{p} \right] & \leq\mathbb{E}\left\|u_0 \right\|^{p}+ w^1\mathbb{E}\left[ \int_{0}^{T}\left\|u(s) \right\|^{p-2}\left( 1+	\left\|X(s) \right\|^2\right)ds\right] \\
			&+p\mathbb{E}\left[\sup_{t\in\left[0,T \right] }\left|\int_{0}^{t}\left\|u(s)\right\|^{p-2}u(s)^T\hat{g}(s,u(s))dw_s \right| \right].
		\end{align*}
		We continue by using Lemma \ref{lem1} with $p=1$ and we have:
		\begin{align*}
			\mathbb{E}\left[  \sup_{t\in\left[0,T \right]}\left\|u(t) \right\|^{p} \right] &\leq \mathbb{E}\left\|u_0 \right\|^{p}+ w^1\mathbb{E}\left[ \int_{0}^{T}\left\|u(s) \right\|^{p-2}\left( 1+	\left\|X(s) \right\|^2\right)ds\right]   \\
			&+4\sqrt{2}p\mathbb{E}\left[\left|\int_{0}^{T}\left\|u(s)\right\|^{2p-2}\left| \hat{g}(s,u(s))\right|^2_Fds \right|^{\frac{1}{2}}\right] \\
			&\leq \mathbb{E}\left\|u_0 \right\|^{p}+ w^1\mathbb{E}\left[ \int_{0}^{T}\left\|u(s) \right\|^{p-2}\left( 1+	\left\|X(s) \right\|^2\right)ds\right]  \\
			&+\mathbb{E}\left[\left|\sup_{t\in\left[0,T \right]}\left\|u(t)\right\|^{p}32p^2 \int_{0}^{T}\left\|u(s)\right\|^{p-2}\left| \hat{g}(s,u(s))\right|^2_Fds \right|^{\frac{1}{2}}\right].
		\end{align*}
		We apply the inequality $\sqrt{ab}\leq \frac{1}{2}a+\frac{1}{2}b \text{ for } a \text{ , } b \geq 0$  and we obtain:
		\begin{align*}
			\mathbb{E}\left[  \sup_{t\in\left[0,T \right]}\left\|u(t) \right\|^{p} \right] 
			&\leq \mathbb{E}\left\|u_0 \right\|^{p}+ w^1\mathbb{E}\left[ \int_{0}^{T}\left\|u(s) \right\|^{p-2}\left( 1+	\left\|X(s) \right\|^2\right)ds\right]\\ &+\frac{1}{2}\mathbb{E}\left[\sup_{t\in\left[0,T \right]}\left\|u(t)\right\|^{p} )\right]+16p^2\mathbb{E}\left[ \int_{0}^{T}\left\|u(s)\right\|^{p-2}\left| \hat{g}(s,u(s))\right|^2_Fds \right]\\
			&\leq 2\mathbb{E}\left\|u_0 \right\|^{p}+ 2w^1\mathbb{E}\left[ \int_{0}^{T}\left\|u(s) \right\|^{p-2}\left( 1+	\left\|X(s) \right\|^2\right)ds\right]  \\
			&+32p^2\mathbb{E}\left[ \int_{0}^{T}\left\|u(s)\right\|^{p-2}\left| \hat{g}(s,u(s))\right|^2_Fds \right]\\
			&\leq 2\mathbb{E}\left\|u_0 \right\|^{p}+ 2w^1\mathbb{E}\left[ \int_{0}^{T}\left\|u(s) \right\|^{p-2}\left( 1+	\left\|X(s) \right\|^2\right)ds\right]  \\
			&+32p^2\left\|A^-\right\|_{\infty}^2c^2 \mathbb{E}\left[ \int_{0}^{T}\left\|u(s)\right\|^{p-2}(1+\left\|X(s)\right\|^{r_2})ds \right],~t\in \left[0,T\right].
		\end{align*}
		We continue by using Lemma \ref{lem2} to obtain:
		
		\begin{align*}
			\mathbb{E}\left[  \sup_{t\in\left[0,T \right]}\left\|u(t) \right\|^{p} \right] 
			&\leq 2\mathbb{E}\left\|u_0 \right\|^{p}+ 2w^1\mathbb{E}\left[ \int_{0}^{T}\left[  \frac{p-2}{p}\left\|u(s) \right\|^{p}+\frac{2}{p}\left( 1+	\left\|X(s) \right\|^2\right)^{\frac{p}{2}}\right] ds\right]   \\
			&+32p^2\left\|A^-\right\|_{\infty}^2c^2 \mathbb{E}\left[ \int_{0}^{T}\left[ \frac{p-1}{p}\left\|u(s)\right\|^{p}\right]ds\right.\nonumber\\ &\left.+ \int_{t_0}^{T}\left[\frac{2}{p}(1+\left\|X(s)\right\|^{r_2})^{\frac{p}{2}}\right]ds \right]	\\
			&\leq 2\mathbb{E}\left\|u_0 \right\|^{p}+ \frac{p-2}{p}(2w^1+32p^2\left\|A^-\right\|_{\infty}^2c^2)\mathbb{E}\left[ \int_{0}^{T}\left\|u(s) \right\|^{p}ds\right]   \\
			&+4\frac{w^1}{p}\mathbb{E}\int_{0}^{T}\left( 1+	\left\|X(s) \right\|^2\right)^{\frac{p}{2}}ds\nonumber\\
			&+32p^2\left\|A^-\right\|_{\infty}^2c^2 \mathbb{E}\left[ \int_{0}^{T}\left[\frac{2}{p}(1+\left\|X(s)\right\|^{r_2})^{\frac{p}{2}}\right]ds \right],~t\in \left[0,T\right].
		\end{align*}
		Combining the estimation  \eqref{equa18} and Theorem \ref{theo2}  and letting:\\
		\begin{eqnarray*}
			w_2&=&2\mathbb{E}\left\|u_0 \right\|^{p}+ \frac{p-2}{p}(2w^1+32p^2\left\| A^-\right\|_{\infty}^2c^2) \int_{0}^{T}\mathbb{E}\left[\left\|u(s) \right\|^{p}\right]ds +4\frac{w^1}{p}\int_{0}^{T}\mathbb{E}\left( 1+	\left\|X(t) \right\|^2\right)^{\frac{p}{2}}ds\\
			&&+64(2^{\frac{p-2}{2}})pc^2\left\|A\right\|_{\infty}^2T,
		\end{eqnarray*}
		we can write:
		\begin{align*}
			\mathbb{E}\left[  \sup_{t\in\left[0,T \right]}\left\|u(t) \right\|^{p} \right]	&\leq w_2+		64(2^{\frac{p-2}{2}})p\left\|A^-\right\|_{\infty}^2c^2 \left[ \int_{0}^{T}\mathbb{E}(1+\left\|X(s)\right\|^{\frac{pr_2}{2}})ds \right],~t\in \left[0,T\right].
		\end{align*}
		Consequently: 
		\begin{align*}
			\mathbb{E}\left[  \sup_{t\in\left[0,T \right]}\left\|u(t) \right\|^{p} \right]	&< \infty	\text{ a.s}.
		\end{align*}
		Moreover, the estimation \eqref{equa24} becomes:
		\begin{align*}
			\mathbb{E}\left[ \sup_{t\in\left[0,T \right]}	\left\|X(t) \right\|^{p}\right]& \leq2^{p-1}\left[ (1+L_{\hat{v}})^{p}\mathbb{E}\left[\sup_{s\in\left[0,T \right] }\left\|u(s)\right\| ^{p}\right]+ \sup_{s\in\left[0,T \right] }\left\| \hat{v}(s,0)\right\|^{p} \right]\\
			&  \leq 2^{p-1} \sup_{s\in\left[0,T \right] }\left\| \hat{v}(s,0)\right\|^{p} +2^{p-1} (1+L_{\hat{v}})^{p}\\
			&\times\left\lbrace  w_2+		64(2^{\frac{p-2}{2}})p\left\| A^-\right\|_{\infty}^2c^2 \mathbb{E}\left[ \int_{0}^{T}(1+\left\|X(s)\right\|^{pr_2})ds \right]\right\rbrace \\
			&\leq M_{p,T,r_2,X_0}.
		\end{align*}
		Here
		\begin{align*}
			M_{p,T,r_2,X_0}&= 2^{p-1}\sup_{s\in\left[0,T \right] }\left\| \hat{v}(s,0)\right\|^{p} + 2^{p-1}(1+L_{\hat{v}})^{p}\\
			&\times\left\lbrace  w_2+		64(2^{\frac{p-2}{2}})p\left\| A^-\right\|_{\infty}^2c^2 \mathbb{E}\left[ \int_{0}^{T}(1+\left\|X(s)\right\|^{pr_2})ds \right]\right\rbrace,
		\end{align*}
		Therefore the proof is completed.
	\end{prev}

	\section{Example} This section presents an example illustrating our results.
	Let us consider  the following SDAE of index 1 in $\mathbb{R}^2$
	\begin{equation}\label{solu1}
		\begin{pmatrix}
			0 & 0  \\
			t^2+1 & 0
		\end{pmatrix}dX=f(t,X(t))dt+g(t,X(t))dW(t),~~ X(0)=X_0, ~~t\in\left[0,T \right].
	\end{equation}
	Clearly, the matrix $A(t)=\begin{pmatrix}
		0 & 0  \\
		t^2+1 & 0 
	\end{pmatrix}$ is  a singular matrix for  $t\in \left[0,T \right] $.
	We have taken  
	\begin{align*}
		f: \left[0,T \right]\times\mathbb{R}^2 \times  &\to \mathbb{R}^2\\
		(t,X_1,X_2)&\mapsto (X_2, \frac{(-X_1-X_1^3)}{t^2+1})=(f_1(t,X), f_2(t,X)),
	\end{align*}
	and 
	\begin{align*}
		g: \left[0,T \right]\times\mathbb{R}^2 &\to \mathbb{R}^{2\times2}\\
		(t,X_1,X_2)&\mapsto \begin{pmatrix}
			0 & 0  \\
			X_2 &  X_1^2+2X_1
		\end{pmatrix}= \begin{pmatrix}
			0 & 0  \\
			g_1(t,X) & g_2(t,X)
		\end{pmatrix}.
	\end{align*}
	Let us prove that $f(\cdot,\cdot)$ and $g(\cdot,\cdot)$ satisfy  the  locally Lipschitz with respect the variable $X$.
	The function $f_2(\cdot, \cdot)$ and $g_2(\cdot, \cdot)$  are clearly locally Lipschitz with respect the variable $X$. Consequently, the functions $f(\cdot, \cdot)$ and $g(\cdot, \cdot)$  are locally Lipschitz with respect to the variable $X$.  	
	%	Therefore, we define the following equation where $W(.)=(w_1(\cdot), w_2(\cdot))$ is a 2-dimensional Wiener process:
	%	
	%	\begin{equation}\label{solu1}
		%		\begin{pmatrix}
			%			0 & 0  \\
			%			t^2+1 & 0
			%		\end{pmatrix}dX=f(t,X(t))dt+g(t,X(t))dW(t),~~ X(0)=X_0, ~~t\in\left[0,T \right].
		%	\end{equation}
	%	Clearly, the matrix $A(t)=\begin{pmatrix}
		%		0 & 0  \\
		%		t^2+1 & 0 
		%	\end{pmatrix}$ is  a singular matrix for  $t\in \left[0,T \right] $.
	The study of the SDAE \eqref{solu1} is divided in five steps.
	\begin{itemize}
		\item[Step 1:] Let us find the projectors Q(t) and  P(t), $t\in\left[0,T \right]$ (see Proposition \ref{prop2}).\\
		$KerA(t)=\left\lbrace (X_1(t), X_2(t)) \text{ such that } X_1(t)=0, t\in\left[0,T \right] \right\rbrace=ImQ(t),~t\in\left[0,T \right]. $\\
		Consequently, we can choose the projector  $Q(t)=\begin{pmatrix}
			0 & 0  \\
			0 & 1
		\end{pmatrix}$ for all $t\in\left[0,T \right]$.\\ Observe that $Q^2(t)=\begin{pmatrix}
			0 & 0  \\
			0 & 1 
		\end{pmatrix}\begin{pmatrix}
			0 & 0  \\
			0 & 1 
		\end{pmatrix}=\begin{pmatrix}
			0 & 0  \\
			0 & 1
		\end{pmatrix},~t\in\left[0,T \right]$.
		
		We recall that the projector $P(t)$ is defined by using the projector  $Q(t)$ as following: \\
		$P(t)=I_2-Q(t)= \begin{pmatrix}
			1 & 0  \\
			0 & 0 
		\end{pmatrix}, ~t\in\left[0,T \right] .$\\
		We will prove that : $A^-(t)=\begin{pmatrix}
			0& \frac{1}{t^2+1} \\
			0 & 0
		\end{pmatrix},  ~t\in\left[0,T \right]. $
		We can prove that the function $f(\cdot, \cdot)$ and $g(\cdot, \cdot)$ satisfy the monotone condition (A1.1) in the Assumption 1  with respect the variable $X$ i.e we prove that:
		$$\left\langle (P(t)X)^T,A(t)^-f(t,X)\right\rangle +\dfrac{1}{2}\left|A^-(t)g(t,X) \right|^2_F \leq k(1+\left\|X \right\|^2 ),~t\in \left[ 0, T \right], X\in \mathbb{R}^2.$$
		We have:
		\begin{align*}
			\left\langle PX, A^-(t)f(t,X) \right\rangle &= \left\langle  \begin{pmatrix}
				1 & 0  \\
				0 & 0 
			\end{pmatrix} \begin{pmatrix}
				X_1  \\
				X_2 
			\end{pmatrix},\begin{pmatrix}
				0& \frac{1}{t^2+1} \\
				0 & 0
			\end{pmatrix}\begin{pmatrix}
				X_2  \\
				- \frac{(X_1+X_1^3)}{t^2+1}
			\end{pmatrix} \right\rangle \\
			&= \left\langle   \begin{pmatrix}
				X_1  \\
				0
			\end{pmatrix},  \begin{pmatrix}
				- \frac{(X_1+X_1^3)}{(t^2+1)^2}  \\
				0
			\end{pmatrix} \right\rangle \\
			&=-\frac{(\left|X_1 \right|^2+\left|X_1 \right|^4 )}{(t^2+1)^2} \\
			& =-\frac{\left|X_1 \right|^2 }{(t^2+1)^2} -\frac{\left|X_1 \right|^4 }{(t^2+1)^2} ,~t\in \left[ 0, T \right], X\in \mathbb{R}^2,
		\end{align*} and 
		\begin{align*}
			\frac{1}{2} \left|A^-g(t,X) \right|_F^2&= \frac{1}{2} \left|\begin{pmatrix}
				0& \frac{1}{t^2+1} \\
				0 & 0
			\end{pmatrix}\begin{pmatrix}
				0 & 0  \\
				X_2 &  X_1^2+2X_1
			\end{pmatrix}  \right|_F ^2	\\
			&=\frac{1}{2} \left|\begin{pmatrix}
				\frac{X_2}{t^2+1}  & \frac{X_1^2+2X_1}{t^2+1}  \\
				0&0  
			\end{pmatrix}  \right| _F^2\\
			& \leq \frac{\left| X_2\right| ^2}{(t^2+1)^2}+ \frac{\left| X_1\right| ^4}{(t^2+1)^2}+\frac{4\left| X_1\right| ^2}{(t^2+1)^2} ,~t\in \left[ 0, T \right], X\in \mathbb{R}^2.
		\end{align*}
		Finally we have:
		\begin{align*}
			\left\langle P(t)X, A^-(t)f(t,X) \right\rangle+\frac{1}{2} \left|A^-g(t,X) \right|_F^2&\leq\frac{3\left| X_1\right| ^2}{(t^2+1)^2}+\frac{\left| X_2\right| ^2}{(t^2+1)^2}\\
			&\leq\frac{3\left| X_1\right| ^2}{t^2+1}+\frac{3\left| X_2\right| ^2}{t^2+1}+\frac{3}{t^2+1}\\
			&\leq 3(1+\left| X_1\right| ^2+\left| X_2\right| ^2),~t\in \left[ 0, T \right], X\in \mathbb{R}^2.\\
		\end{align*} 
		And the monotone condition is satisfied.
		
		\item[Step 2:]  Let us find the projector $R(t)$, $t\in\left[0,T \right]$ and the constraints.\\
		The projector $R(\cdot)$ satisfies the relations $R(t)A(t)=0$ and $R^2(t)=R(t),~t\in \left[0,T \right] .$ Therefore we can choose $R(t)=\begin{pmatrix}
			1 & 0 \\
			0 & 0
		\end{pmatrix}, ~t\in\left[0,T \right]$.\\
		The constraint is given by the equation  $A(t)v+R(t)f(t,u+v)=0$, where $u\in ImP(t)$ and $v\in ImQ(t),  ~t\in\left[0,T \right]$. We replace $R(t$) and $A(t)$ and we obtain the constraints:
		\begin{align}\label{cons}
			\begin{pmatrix}
				0 & 0  \\
				t^2+1 & 0 
			\end{pmatrix}v(t)+\begin{pmatrix}
				1 & 0 \\
				0 & 0
			\end{pmatrix}f(t,u(t)+v(t))=0,~t\in\left[0,T \right].
		\end{align}
		
		\item [Step 3: ] We can show that the hypothesis of Theorem \ref{theo1} is satisfied, i.e., we want to show the Jacobian matrix $J(\cdot,\cdot)$ possesses a globally bounded inverse.
		
		\begin{align*}
			J(t,X)&=A(t)+R(t)f'_X(t,X)\\
			&=\begin{pmatrix}
				0 & 0  \\
				t^2+1 & 0 
			\end{pmatrix}+\begin{pmatrix}
				1 & 0  \\
				0 & 0
			\end{pmatrix}\begin{pmatrix}
				0 & 1  \\
				\frac{-(1+3X^2_1)}{t^2+1} & 0
			\end{pmatrix}\\
			&=\begin{pmatrix}
				0 & 0  \\
				t^2+1 & 0 
			\end{pmatrix}+\begin{pmatrix}
				0 & 1  \\
				0 & 0
			\end{pmatrix}\\
			&=\begin{pmatrix}
				0 & 1 \\
				t^2+1 & 0
			\end{pmatrix}, X\in \mathbb{R}^2,~t\in \left[0,T \right].
		\end{align*}
		$$\text{det}(J(t,X))= 		-(t^2+1)\neq 0 ,~X\in \mathbb{R}^2,~t\in \left[0,T \right].$$
		Consequently, there exists an implicit function $v(\cdot)=\hat{v}(\cdot,\cdot)$ solution of $\eqref{cons}$.\\ 
		Observe that 
		\begin{align}
			J^{-1}(t,X)=\begin{pmatrix}
				0 & \frac{1}{t^2+1} \\
				1 & 	0
			\end{pmatrix};~~X\in \mathbb{R}^2,~t\in \left[0,T \right].
		\end{align}
		Finally we have:
		\begin{align*}
			\left\| J^{-1}(t,X)\right\|^2&\leq 2,~X\in \mathbb{R}^2,~t\in \left[0,T \right].
		\end{align*}
		and then $J^{-1}(\cdot, \cdot)$ is globally bounded.\\
		Note that we use the fact that: 
		$$\frac{1}{t^2+1}\leq 1,~X\in \mathbb{R}^2,~t\in \left[0,T \right].$$
		So we have from \eqref{cons}:
		
		\begin{align}\label{cons1}
			\begin{pmatrix}
				0 & 0  \\
				t^2+1 & 0 
			\end{pmatrix}\begin{pmatrix}
				v_1(t) \\
				v_2(t)
			\end{pmatrix}+\begin{pmatrix}
				1 & 0 \\
				0 & 0
			\end{pmatrix}\begin{pmatrix}
				u_2(t)+v_2(t) \\
				\frac{-(u_1(t)+v_1(t))-(u_1(t)+v_1(t))^3}{1+t^2}
			\end{pmatrix}=0,~t\in\left[0,T \right].
		\end{align}
		From $\eqref{cons1}$ we have,  $v_2(t)=-u_2(t)$ and $v_1(t)=0, t\in \left[0,T \right].$ \\
		So we have $\hat{v}(t,u(t))=\begin{pmatrix}
			0 \\
			-u_2(t)
		\end{pmatrix}.$
		
		\item [Step 4: ]  Let us find the non-singular matrix  $D(t)$ and the pseudo-inverse matrix $A^-(t), t\in\left[0,T \right]$.\\
		Recall that $D(t)$ satisfies the relation $D(t)A(t)=P(t), t\in\left[0,T \right]$ and it is non singular matrix. We use the proof of Proposition \ref{prop2} to construct the matrix D. Consequently we obtain:\\ $D(t)=\begin{pmatrix}
			1 & \frac{1}{t^2+1} \\
			t^2+1 & 0
		\end{pmatrix}$  and\\  $A^-(t)=D(t)(I_2-R(t))=\begin{pmatrix}
			1 & \frac{1}{t^2+1}  \\
			t^2+1 & 0
		\end{pmatrix}\left( \begin{pmatrix}
			1 & 0  \\
			0 & 1 
		\end{pmatrix}-\begin{pmatrix}
			1 & 0  \\
			0 & 0
		\end{pmatrix}\right)=\begin{pmatrix}
			0 & \frac{1}{t^2+1} \\
			0 & 0
		\end{pmatrix}$,  \\ $t\in\left[0,T \right]. $\\
		Observe that , $A^-(t)A(t)=\begin{pmatrix}
			0& \frac{1}{t^2+1} \\
			0 & 0
		\end{pmatrix} \begin{pmatrix}
			0& 0  \\
			t^2+1 & 0
		\end{pmatrix} =\begin{pmatrix}
			1 & 0 \\
			0 &0
		\end{pmatrix} =P(t)$ and  \\
		$A(t)A^-(t)=\begin{pmatrix}
			0& 0  \\
			t^2+1 & 0
		\end{pmatrix}\begin{pmatrix}
			0 & \frac{1}{t^2+1} \\
			0 & 0
		\end{pmatrix} =\begin{pmatrix}
			0 & 0  \\
			0 & 1
		\end{pmatrix}=\begin{pmatrix}
			1 & 0  \\
			0 & 1
		\end{pmatrix}-\begin{pmatrix}
			1 & 0  \\
			0 & 0
		\end{pmatrix}=I-R(t), ~t\in\left[0,T \right].   $\\
		Consequently $A^-(\cdot)$ is the unique pseudo-inverse of the matrix $A(\cdot)$.
		\item [Step 5: ] Finally, we have the following associated SDE:\\
		\begin{align}\label{solu}
			u_1(t)-u_1(0)&=\int_0^t
			\frac{f_2(s,u(s)+\hat{v}(s,u(s)))}{t^2+1} ds\nonumber\\
			&+\int_0^t
			\frac{g_1(s,u(s)+\hat{v}(s,u(s)))}{t^2+1}dW_1(s)+\int_0^t
			\frac{g_2(s,u(s)+\hat{v}(s,u(s)))}{t^2+1}dW_2(s),~t\in\left[0,T \right].\nonumber\\
			u_2(t)-u_2(0)&=0. 
		\end{align}
		Observe that  $\begin{pmatrix}
			u_1(0)  \\
			u_2(0)
		\end{pmatrix}=P(0)X(0)=\begin{pmatrix}
			1 & 0 \\
			0& 0
		\end{pmatrix}\begin{pmatrix}
			X_1(0) \\
			X_2(0)
		\end{pmatrix}=\begin{pmatrix}
			X_1(0)  \\
			0
		\end{pmatrix}.$\\
		So $u_2(t)=0, ~t\in\left[0,T \right]$ and then $\hat{v}(t,u(t))=\begin{pmatrix}
			0 \\
			0
		\end{pmatrix}, ~t\in\left[0,T \right].$ \\
		Finally, we have:
		\begin{align}\label{solu}
			u_1(t)-u_1(0)&=\int_0^t
			\frac{f_2(s,u_1(s),0)}{t^2+1} ds+\int_0^t
			\frac{g_1(s,u_1(s),0)}{t^2+1}dW_1(s)+\int_0^t
			\frac{g_2(s,u_1(s),0)}{t^2+1}dW_2(s)\nonumber\\
			u_1(0)&=X_1(0),~t\in\left[0,T \right]. 
		\end{align}
		
	\end{itemize}

	Note that the equation \eqref{solu} is an SDE with local Lipschitz and monotone conditions. Consequently, the solution $u(\cdot)$ and respectively the solution $X(\cdot)$  of equations \eqref{solu} and respectively \eqref{solu1} exist and belong to $\mathcal{M}^2(\left[0,T \right], \mathbb{R}^2 )$.

	\section{Conclusion and Perspectives}
	%The goal of this paper was to provided the well-posedness and regularity of the solution for nonlinear non-autonomous SDAEs under local Lipschitz conditions. To do this, we transformed the initial SDAE into an ordinary SDE with algebraic constraints.  In our case the matrix $A(\cdot)$ depended on the time t and this make the transformation more challenges. Nevertheless we succeed to transform the SDAE \eqref{equa1} into a SDE \eqref{equa10a} with algebraic constraints \eqref{equa6}. More precisely, we tackle this challenge by using matrix transformation, It\^o process and the implicit functions theorem. We proved that the new equation we obtained called inherent regular  SDE satisfies  the locally Lipschitz and the monotone conditions on the drift and the diffusion coefficient. Consequently, the solution exists and it is unique. Furthermore, we also study the regularity of the solution of SDAE with some conditions or assumptions that we consider in order to have a better order of convergence in the numerical simulation.\\
	
	The main objective of this paper was to establish the well-posedness and regularity of the solution for nonlinear non-autonomous stochastic differential algebraic equations (SDAEs) under local Lipschitz conditions. To achieve this, we employed a transformation technique to convert the initial SDAE into an ordinary stochastic differential equation (SDE) with algebraic constraints. In our case, the matrix $A(\cdot)$ was dependent on time $t$, which added complexity to the transformation process. However, we successfully managed to convert the SDAE \eqref{equa1} into an SDE \eqref{equa10a} with algebraic constraints \eqref{equa6}. This was accomplished through the use of matrix transformations, It\^o processes, and the implicit function theorem. We demonstrated that the resulting equation, known as the inherent regular SDE, satisfied the local Lipschitz and monotone conditions for both the drift and diffusion coefficients. As a result, the solution to the SDAE exists and is unique. Additionally, we examined the regularity of the solution for SDAEs under certain conditions or assumptions, aiming to achieve a higher order of convergence in numerical simulations.
	
	%\section*{Perspectives}
	In most applications, finding analytic solutions for SDAEs is impossible or not easy. Therefore we plan to continue the study of SDAE of index-1 by providing numerical methods for computing the approximated solutions.   
	\section{Appendix}
	\appendix
	\section{Proof of Proposition \ref{prop2}}\label{Appendix}
	
	\begin{prev}
		%We have: $P=I-Q$ then $Im(P)=Im(I)-Im(Q)$, let $dim (P)=p$ and $dim(A)=n$ . By definition we have: $dim(A)=dim(Ker(A))+dim(Im(A))$, but $Ker(A)=Im(Q)$ so  $dim(A)=dim(Im(Q))+dim(Im(A))$ then $dim(Im(A))=n-n+p=p=dim (Im(P))$. So we have: $dim(Im(A))=dim (Im(P))$ then there exists a invertible matrix $\bar{D}$ such that $\bar{D}A=P$
		We know that $\mathbb{R}^n=kerB\bigoplus ImB$. Let us assume that $\left\lbrace e_i\right\rbrace_{i=1}^{n} $  is a basis of $\mathbb{R}^n$, such that $\left\lbrace e_i\right\rbrace_{i=1}^{q} $ is a basis of $KerB$ and of $\left\lbrace e_i\right\rbrace_{i=q+1}^{n} $ is  a basis of $ImB$. Note that $q$ is the dimension of $KerB$ or $ImQ$. Consequently,  for all $e_i$, $i=1,..., n$ we have:
		\begin{equation*}
			\begin{cases}
				Be_i=0 & \text{if } e_i\in  KerB, \\
				Be_i\neq 0  & \text{if } e_i\in  ImB.
			\end{cases}
		\end{equation*}
		Therefore,  the basis $ \left\lbrace Be_i \right\rbrace_{i=q+1}^{n} $ is also a basis of $ImB$. By definition for any $w\in ImB$ there exists  $\left\lbrace \alpha_i\right\rbrace_{i=q+1}^{n} $ such that $w=\sum_{i=q+1}^{n} \alpha_iBe_i$. Remember that $P=I-Q$ and $QB=0$. In addition, we obtain the equality:
		\begin{equation}\label{y}
			Pw=\sum_{i=q+1}^{n}P \alpha_iBe_i=\sum_{i=q+1}^{n} \alpha_i(I-Q)Be_i=\sum_{i=q+1}^{n} \alpha_iBe_i=w. 
		\end{equation}
		Recall that $KerB=span\left\lbrace e_i\right\rbrace_{i=1}^{q}$, and $ImB=span\left\lbrace e_i\right\rbrace_{i=q+1}^{n}=span\left\lbrace Be_i \right\rbrace_{i=q+1}^{n}$. We continue by defining the operator L  such that:
		\begin{equation}\label{L}
			\left\{
			\begin{array}{ccc}
				Lz=z  &   \text{ if} ~~ z\in KerB,\\
				Lw=Bw  &   \text{ if} ~~ w\in ImB.
			\end{array} \right.
		\end{equation}

		Moreover, by definition,  for an  $x\in KerB\bigoplus ImB$  there exists $z\in KerB$ and $w\in ImB$ such that $x=z+w$. By using the equation \eqref{L} we have that $Lx=z+Bw$. \\
		Note that,  the operator :$$L: \mathbb{R}^n\to \mathbb{R}^n $$ is an one to one or an bijective  operator. Finally, for any $y\in \mathbb{R}^n$ there exists $z\in KerB$  and $w \in ImB $ such that $y=z+w$. In particular for $z=0$, we use the equation \eqref{y} 
		%and the fact that $Bv=0, ~ v\in KerB$ 
		in other to obtain: $$L^{-1}By=L^{-1}B(0+w)=L^{-1}Bw=L^{-1}Lw=w=Pw=P(0+w)=Py \implies L^{-1}B=P. $$
		
		%Note that we use the fact that $ Bz=0, ~z\in KerB$.
		We conclude by choosing $D=L^{-1}$
	\end{prev}\\
	
	\textbf{Competing Interests:}
	The authors declare that they have no competing interests.
	\section{Declarations}

\textbf{Ethical Approval } 
(applicable for both human and/ or animal studies. Ethical committees, Internal Review Boards and guidelines followed must be named. When applicable, additional headings with statements on consent to participate and consent to publish are also required)  
 \emph{Not applicable}\\
\textbf{Funding }
(details of any funding received)
 \emph{Not applicable} \\
\textbf{Availability of data and materials  }
(a statement on how any datasets used can be accessed)
\emph{Not applicable} \\

%\begin{thebibliography}{}
%
%\end{thebibliography}
%%	
%	\newpage
%	\bibliographystyle{plain}
%	\bibliography{References}	
%	
	
\end{document}